\theoremstyle{plain}
\newtheorem{theorem}{Theorem}[section]
\theoremstyle{remark}
\newtheorem{remark}[theorem]{Remark}
\theoremstyle{plain}
\newtheorem{lemma}[theorem]{Lemma}
\newtheorem{proposition}[theorem]{Proposition}
\numberwithin{equation}{section}
\def\a{\alpha}
\def\g{\gamma}
\def\m{\mu}
\def\l{\lambda}
\def\a{\alpha}
\def\g{\gamma}
\def\l{\lambda}
\def\cI{{\cal I}}
\def\cR{{\cal R}}
\def\Re {\cR e }
\def\Im {\cI m }
 \numberwithin{equation}{section}
\definecolor{darkgreen}{RGB}{10,160,20}
\begin{document}
\title{Stability of the Abstract  Thermoelastic System with Singularity
		\thanks{The first author is supported by the China Scholarship Council (grant No. 202206030056). The second author is supported by the National Natural Science Foundation of China (grant No. 62073236). The fourth author is supported by the National Natural Science Foundation of China (grants No. 12271035, 12131008) and Beijing Municipal Natural Science Foundation (grant No. 1232018).}
		
	}
	\date{}

		\author{Chenxi Deng\thanks{School of Mathematics and Statistics,
				Beijing Institute of Technology, Beijing 100081, China (email: chenxideng@bit.edu.cn)}
		,\quad Zhong-Jie Han\thanks{{School of Mathematics, BIIT Lab, Tianjin University,
				Tianjin 300354,   China {(email: zjhan@tju.edu.cn)}}
		},\quad Zhaobin Kuang\thanks{{Computer Science Department, Stanford University, Stanford 94305, U.S.A.{(email: zhaobin.kuang@gmail.com)}}
	}, \quad
		Qiong Zhang\thanks{School of Mathematics and Statistics, Beijing Key Laboratory on MCAACI,
		Beijing Institute of Technology, Beijing 100081, China (email: zhangqiong@bit.edu.cn)}
  \thanks{Corresponding author.}
}
	\maketitle

	\begin{abstract}
In this paper, we analyze an abstract thermoelastic system,
where the heat conduction follows the Cattaneo law. Zero becomes a spectrum point of the system operator when the coupling and thermal damping parameters of system satisfy specific conditions.
We obtain the decay rates of solutions to the system with or without the
inertial term. Furthermore, the decay
rate of the system without inertial terms is shown to be optimal.

	\end{abstract}
	
 {\bf Key words:}\hspace{1mm} thermoelastic system, stability, inertial term, singularity.
 \vspace{3mm}

  {\bf AMS subject classifications:}\hspace{2mm} 35Q74, 74F05.


\section{Introduction}

 In this paper, we study an abstract thermoelastic system in which the heat conduction follows the Cattaneo law. The Cattaneo law (\cite{cattaneo,vernotte}) describes finite heat propagation speed in a medium, which resolves the paradox of infinite speed of heat transfer in Fourier law and characterizes the wave-like motion of heat, known as the second sound in physics. The abstract thermoelastic system reads as follows:
 \begin{equation}\label{101}
	\left\{
	\begin{array}{l}
	u_{tt} +mA^\gamma  u_{tt} +\sigma A u -A^\alpha\theta =0,  \\ \noalign{\medskip}  \displaystyle
	\theta_t -A^{\frac{\beta}{2}}q +A^\alpha u_t =0,  \\ \noalign{\medskip}  \displaystyle
	\tau q_t+q+A^{\frac{\beta}{2}}\theta=0,  \\ \noalign{\medskip}  \displaystyle
	u(0)= u_0, \;\; u_t(0) = u_1,\;\; \theta(0) =\theta_0,\;\; q(0) = q_0,
	\end{array} \right.
	\end{equation}
where $A$ is a self-adjoint, positive definite operator with compact resolvent on a Hilbert space $H$ {equipped} with inner product $(\cdot,\,\cdot)$ and the induced norm $\|\cdot\|$.
Parameters $  \alpha, \;\beta,\;\gamma  $   represent coupling, thermal damping, and inertial characteristics, respectively.
Here we assume $(\alpha, \;\beta,\;\gamma) \in Q$, where $$ Q :=\Big\{(\alpha, \;\beta,\;\gamma) \in [0,1]\times [0,1]\times(0,1]\;\Big|\:  \alpha>{\beta+1\over2} \Big\}.$$
 {We further assume $m$ is non-negative. In fact, $m>0$ and $m=0$ indicate the system includes and excludes inertial term, respectively.} Note that we omit the case $\gamma=0$ since  it can be encompassed within the
case $m = 0$. {Let} $\sigma>0$ { denote} wave speed, $\tau\ge0$ {denote} the  the relaxation parameter of heat conduction. {Particularly}, the heat conduction follows Cattaneo law when $\tau >0$, and Fourier law when $\tau=0.$

There are {several studies investigating} the long time behavior or regularity of thermoelastic system (\ref{101}) under the Fourier heat conduction mechanism, i.e., {when $\tau=0$ in (\ref{101})}. {We refer the readers to \cite{Ammar,  haoliu1,haoliu2,kim, LL,rivera1} for the case $m=0$ and \cite{AL, dell,RS,russell} for the case $m\neq 0$.}
 As for the system \eqref{101} with Cattaneo's type heat conduction, Fern{\'a}ndez Sare, Liu, and Racke \cite{liure} investigated the exponential
stability region of the system \eqref{101}
when parameters $\alpha, \;\beta,\;\gamma $ satisfy certain {assumptions}.
Recently, \cite{dhkz,han} investigated the exponential stability and optimal polynomial {stability} of \eqref{101} with {and} without inertial term when $(\alpha, \;\beta,\;\gamma) \in Q^c :=[0,1]\times[0,1]\times (0,1]\setminus Q$.
More precisely, { the region $Q^c$ was divided into several subregions. Within each subregion, comprehensive spectrum analysis and resolvent estimation were conducted under varying conditions, such as when the inertial parameter $m$ is greater than zero or equal to zero, and when the wave speed is the same ($\sigma\tau=1$) or not ($\sigma\tau\neq 1$).}

 For the case {where} $(\alpha, \;\beta,\;\gamma) \in Q $, it is easy to know that zero is a spectrum point (see Section \ref{sec2}). We shall use the results in \cite{BCT} to obtain the polynomial stability of the system by proving the estimation of the {resolvent of the corresponding semigroup generator} both at infinity and near zero.  Our analysis
  includes the polynomial decay rate for the corresponding semigroup {when the system is with and without inertial term ($m>0$ and $=0$), respectively.}

The paper is organized as follows. In  Section \ref{sec2},  the preliminaries and the main results
of this paper are given.   We prove our main results for cases including and excluding the inertial term in Section \ref{mneq0} and \ref{m=0},
respectively. Section \ref{example} is dedicated to presenting applications to our results.

 \section{Preliminaries and main resuls}\label{sec2}
  Define a Hilbert space
\begin{equation}
\label{h}\mathcal{H}:=\mathcal{D}(A^{\frac{1}{2}})\times \mathcal{D}(A^{\frac{\gamma}{2}})\times H\times H
\end{equation}
with the inner product
$$ \langle U_1,U_2\rangle_{\mathcal{H}}=\sigma(A^{\frac{1}{2}}u_1,A^{\frac{1}{2}}u_2)+m(A^{\frac{\gamma}{2}}v_1, A^{\frac{\gamma}{2}}v_2)+(v_1,v_2)+(\theta_1, \theta_2)+\tau (q_1,q_2),$$
where
$U_{i}=(u_{i},v_{i},\theta_{i},q_{i})^{\top}\in\mathcal{H},\;i=1,2$.  Define an operator $\mathcal{A}:{\mathcal {D}}(\mathcal{ A}) \subseteq \mathcal{H}\to \mathcal{H}$ as \begin{equation}
\label{a}
\mathcal{A}\left[\begin{array}{c}
u\\
v\\
\theta\\
q
\end{array} \right]=\left(\begin{array}{c}
v\\
-(I+mA^{\gamma})^{-1}(\sigma A  u-A^{\alpha}\theta)\\
-A^\alpha v+A^{\frac{\beta}{2}}q\\ \displaystyle
{1\over \tau}(-q-A^{\frac{\beta}{2}}\theta)
\end{array}\right),
\end{equation}
    with domain
$$\mathcal{D}(\mathcal{A})=\left\{ (u, v, \theta, q)^{\top}\in \mathcal{H}\,\left|\,
\begin{array}{l}
v\in \mathcal{D}(A^{1\over2}),\;   \sigma A  u-A^\alpha\theta \in \mathcal{D}(A^{ -\frac{\gamma}{2}}),\;\\
	-A^\alpha v+A^{\frac{\beta}{2}}q\in H,\;
q+A^{\frac{\beta}{2}}\theta\in H
\end{array}
\right.\right\}.$$

Then {system} \eqref{101} can be written as an abstract first-order evolution equation:
\begin{equation}\label{105}
	\left\{ \begin{array}{l}
		\displaystyle {d\over dt}  U(t)=\mathcal{A} U(t) ,\quad t>0,\\ \noalign{\medskip}  \displaystyle
		U(0)=U_0{\in\mathcal{H}}. \end{array} \right.
\end{equation}

\begin{figure}
    \centering
        \includegraphics[width=7cm]{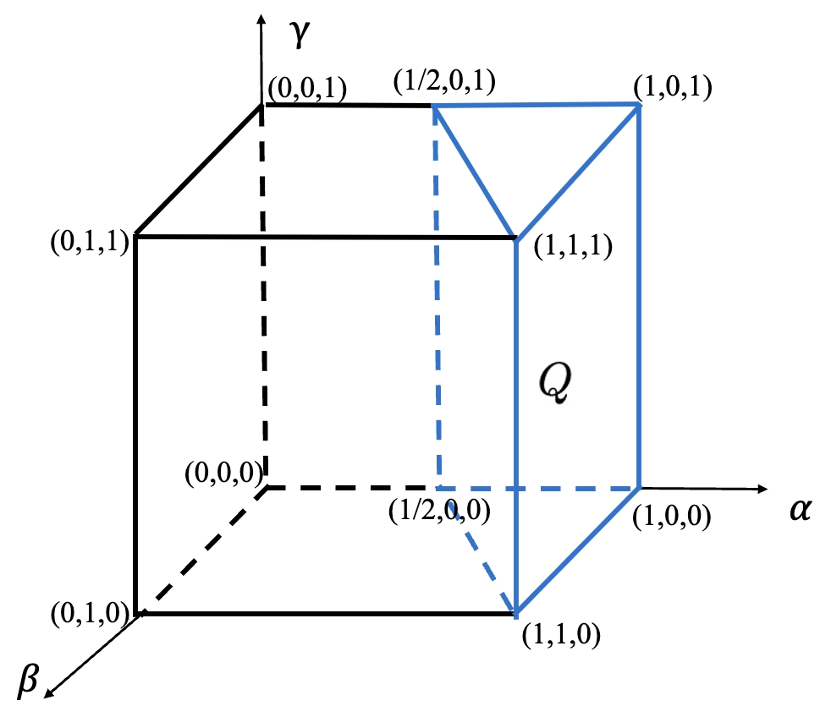}
    \caption{The region of $Q$ (blue triangular prism).}
    \label{12024}
\end{figure}

The closedness of $\mathcal{A}$ is not obvious, we thereby have the following Lemma.
\begin{lemma}\label{4.1}
    	Let $m\ge 0$ and  $(\alpha,\beta,\gamma)\in Q$, $\mathcal{H}$ and $\mathcal{A}$ be defined by \eqref{h} and \eqref{a}, respectively. Then ${\mathcal{A}}$ is closed.
\end{lemma}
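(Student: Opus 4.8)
The plan is to verify closedness directly from the definition: suppose $U_n = (u_n, v_n, \theta_n, q_n)^\top \in \mathcal{D}(\mathcal{A})$ with $U_n \to U = (u,v,\theta,q)^\top$ in $\mathcal{H}$ and $\mathcal{A} U_n \to W = (w^1, w^2, w^3, w^4)^\top$ in $\mathcal{H}$; I must show $U \in \mathcal{D}(\mathcal{A})$ and $\mathcal{A} U = W$. The convergence in $\mathcal{H}$ unwraps, component by component, into convergence in the graph-norm pieces: $u_n \to u$ in $\mathcal{D}(A^{1/2})$, $v_n \to v$ in $\mathcal{D}(A^{\gamma/2})$, $\theta_n \to \theta$ and $q_n \to q$ in $H$. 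The first row of $\mathcal{A}$ gives $v_n \to w^1$ in $\mathcal{D}(A^{1/2})$, so $w^1 = v$ and automatically $v \in \mathcal{D}(A^{1/2})$, which is the first domain condition. The last row gives $\tau^{-1}(-q_n - A^{\beta/2}\theta_n) \to w^4$ in $H$; since $q_n \to q$ in $H$, this forces $A^{\beta/2}\theta_n$ to converge in $H$, and as $A^{\beta/2}$ is closed and $\theta_n \to \theta$, we get $\theta \in \mathcal{D}(A^{\beta/2})$, $A^{\beta/2}\theta_n \to A^{\beta/2}\theta$, hence $q + A^{\beta/2}\theta \in H$ and $w^4 = \tau^{-1}(-q - A^{\beta/2}\theta)$.

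Next I would handle the third row. We have $-A^\alpha v_n + A^{\beta/2} q_n \to w^3$ in $H$. Here the subtlety is that the two unbounded pieces must be separated carefully. The hypothesis $(\alpha,\beta,\gamma) \in Q$ gives $\alpha > (\beta+1)/2 > \beta/2$ but also $\alpha \le 1$, so $A^\alpha v_n$ need not converge merely from $v_n \to v$ in $\mathcal{D}(A^{1/2})$ — indeed $\alpha$ can exceed $1/2$. I would instead argue as follows: by interpolation $\mathcal{D}(A^{\beta/2}) \supseteq \mathcal{D}(A^{1/2})$ when $\beta \le 1$, so in fact the condition is the other way for the $q$ term, and I should reconsider which combination of the regularity we actually control. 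The clean route is to observe that on $\mathcal{D}(\mathcal{A})$ the operator $\mathcal{A}$ differs from a closed operator by a relatively bounded perturbation, but the most transparent argument is: since $\theta_n \to \theta$ in $\mathcal{D}(A^{\beta/2})$ (just shown), we get $A^{\beta/2}\theta_n \to A^{\beta/2}\theta$ and thus from the third-row convergence $-A^\alpha v_n + A^{\beta/2} q_n \to w^3$, combined with whatever independent control of $A^{\beta/2} q_n$ or $A^\alpha v_n$ we can extract; if neither is separately available one packages $(u,v,\theta,q)$ into the second-row condition first.

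So I would do the second row before the third. The second row reads $-(I + mA^\gamma)^{-1}(\sigma A u_n - A^\alpha \theta_n) \to w^2$ in $\mathcal{D}(A^{\gamma/2})$. Since $(I+mA^\gamma)^{-1}$ is bounded and boundedly invertible from $\mathcal{D}(A^{\gamma/2})$ onto $\mathcal{D}(A^{\gamma/2})$ (it commutes with $A$ and is given by a bounded Borel function of $A$), applying $-(I+mA^\gamma)$ shows $\sigma A u_n - A^\alpha \theta_n \to -(I+mA^\gamma) w^2$ in $\mathcal{D}(A^{-\gamma/2})$. Because $u_n \to u$ in $\mathcal{D}(A^{1/2})$ and $\mathcal{D}(A^{1/2}) \hookrightarrow \mathcal{D}(A^{-\gamma/2})$ (as $\gamma \le 1$ so $-\gamma/2 \le 1/2$), $A u_n \to A u$ in $\mathcal{D}(A^{-\gamma/2})$; hence $A^\alpha \theta_n$ also converges in $\mathcal{D}(A^{-\gamma/2})$, and since $\alpha - \gamma/2 < \alpha \le 1$ and $\theta_n \to \theta$ in $H \hookrightarrow \mathcal{D}(A^{-\gamma/2})$-type spaces after applying $A^{\alpha}$, closedness of $A^{\alpha}$ (viewed between appropriate scale spaces) yields $\sigma A u - A^\alpha \theta \in \mathcal{D}(A^{-\gamma/2})$ and the limit equals $w^2$ after reapplying $-(I+mA^\gamma)^{-1}$. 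With $\theta \in \mathcal{D}(A^{\beta/2})$ already known and $\alpha \le 1$, one then returns to the third row: write $A^\alpha v_n = A^{\beta/2} q_n + (A^\alpha v_n - A^{\beta/2} q_n)$; the right side converges ($A^{\beta/2}q_n \to A^{\beta/2}q$ needs $q \in \mathcal{D}(A^{\beta/2})$, which we extract from the fourth-row argument applied symmetrically, and the parenthesized term converges to $-w^3$), so $A^\alpha v_n$ converges, and closedness of $A^\alpha$ with $v_n \to v$ gives $v \in \mathcal{D}(A^\alpha)$, $-A^\alpha v + A^{\beta/2} q = w^3 \in H$. Collecting all four identifications shows $U \in \mathcal{D}(\mathcal{A})$ and $\mathcal{A}U = W$.

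The main obstacle is the bookkeeping of the interlocking unbounded terms in rows two and three: one cannot treat $A u_n$, $A^\alpha \theta_n$, $A^\alpha v_n$, $A^{\beta/2} q_n$ in isolation, and the order in which conditions are deduced matters — the fourth row must be used to get $\theta, q \in \mathcal{D}(A^{\beta/2})$ first, then the second row to get $\sigma A u - A^\alpha \theta \in \mathcal{D}(A^{-\gamma/2})$, and only then the third row. The key technical facts that make each step go through are that $A$ has compact resolvent and is positive definite (so its fractional powers form a Hilbert scale with continuous embeddings for decreasing exponent), that $(I+mA^\gamma)^{\pm 1}$ are isomorphisms on each scale space since they are bounded functions of $A$, and the inequalities $\beta/2 < \alpha \le 1$, $0 < \gamma \le 1$ from the definition of $Q$ that guarantee the relevant embeddings point the right way.
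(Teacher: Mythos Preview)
Your overall strategy---verify closedness directly and exploit closedness of fractional powers of $A$---matches the paper's, and your treatment of the first and fourth rows is fine. However, the arguments for the second and third rows contain genuine gaps, both stemming from the same error: you try to separate the coupled unbounded summands and argue that each converges individually, but the hypotheses do not support this.

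Concretely, in the second row you assert $Au_n \to Au$ in $\mathcal{D}(A^{-\gamma/2})$ from $u_n \to u$ in $\mathcal{D}(A^{1/2})$. But $A$ maps $\mathcal{D}(A^{1/2})$ isometrically onto $\mathcal{D}(A^{-1/2})$, and for $\gamma<1$ the space $\mathcal{D}(A^{-\gamma/2})$ is \emph{strictly smaller} than $\mathcal{D}(A^{-1/2})$; convergence in the larger space does not pass to the smaller one. In the third row you claim $q\in\mathcal{D}(A^{\beta/2})$ can be obtained ``from the fourth-row argument applied symmetrically,'' but the fourth row only yields $\theta\in\mathcal{D}(A^{\beta/2})$; there is no symmetric statement giving regularity of $q$, and without it you cannot isolate $A^\alpha v_n$ as you propose.

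The paper avoids this by never separating the terms. For the third row one applies $A^{-\alpha}$---the inverse of the \emph{largest} power present---to the whole combination, obtaining $-v_n+A^{\beta/2-\alpha}q_n$. Since $\beta/2-\alpha<0$ on $Q$, every piece is now a bounded operator acting on something that converges in $H$, so this converges to $-v_0+A^{\beta/2-\alpha}q_0$ in $H$; meanwhile $A^\alpha$ applied to it equals $-A^\alpha v_n+A^{\beta/2}q_n\to w_3$. Closedness of $A^\alpha$ then gives $-A^\alpha v_0+A^{\beta/2}q_0=w_3$ in one stroke, with no need to know $v_0\in\mathcal{D}(A^\alpha)$ or $q_0\in\mathcal{D}(A^{\beta/2})$ separately. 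The same device---using the closed operator $(I+mA^\gamma)^{-1}A$ together with the bounded operator $A^{\alpha-1}$---handles the second row.
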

\begin{proof}
    Assume that there exists $U_n=(u_n,v_n,\theta_n,q_n)^{\top}\in \mathcal{D}(\mathcal{A} )$ such that $U_n\to U_0:=(u_0,v_0,\theta_0,q_0)^{\top}$ in $\mathcal{H}$, $\mathcal{A}U_n\to W:=(w_1,w_2,w_3,w_4)^{\top}$ in $\mathcal{H}$, i.e., $$\left(\begin{array}{c}
v_n\\
-(I+mA^{\gamma})^{-1}(\sigma A  u_n-A^{\alpha}\theta_n)\\
-A^\alpha v_n+A^{\frac{\beta}{2}}q_n\\ \displaystyle
{1\over \tau}(-q_n-A^{\frac{\beta}{2}}\theta_n)
\end{array}\right)\to\left(\begin{array}{c}
w_1\\
w_2\\
w_3\\
w_4
\end{array}\right), \quad\text{in } \mathcal{D}(A^{\frac{1}{2}})\times \mathcal{D}(A^{\frac{\gamma}{2}})\times H\times H.$$
It suffices to show $U_0\in \mathcal{D}(\mathcal {A})$ and $\mathcal{A}U_0=W$.

By the assumption we have $v_n\to w_1$ in $\mathcal{D}(A^{1/2})$, $v_n\to v_0$ in $\mathcal{D}(A^{\gamma/2})$, then $v_0=w_1$.

Since $U_n\in \mathcal{D}(\mathcal{A})$, which implies $-q_n-A^{\beta/2}\theta_n\in H$, then {$B_n:=-A^{-\beta/2} q_n-\theta_n\in \mathcal{D}(A^{\beta/2})$}. Recalling that $A$ is a self-adjoint, positive definite operator on $H$,  and $ q_n\to q_0$ in $H$; therefore, $A^{-\beta/2} q_n\to A^{-\beta/2}q_0$ in $H$, which together with $\theta_n\to \theta_0$ in $H$ implies
 $$B_n\to -A^{-\beta/2} q_0-\theta_0 \quad \text{in } H.$$
 Moreover, by the assumption we have $A^{\beta/2}B_n\to \tau w_4$ in $H$. Note that $A^{\beta/2}$ is closed, we get $-q_0-A^{\frac{\beta}{2}}\theta_0=\tau w_4$.

By a similar argument as above, we can prove that $-A^\alpha v_0+A^{\frac{\beta}{2}}q_0=w_3$ and $-(I+mA^{\gamma})^{-1}(\sigma  A u_0-A^{\alpha}\theta_0)=w_2$. In conclusion,
$$\left(\begin{array}{c}
v_0\\
-(I+mA^{\gamma})^{-1}(\sigma A  u_0-A^{\alpha}\theta_0)\\
-A^\alpha v_0+A^{\frac{\beta}{2}}q_0\\ \displaystyle
{1\over \tau}(-q_0-A^{\frac{\beta}{2}}\theta_0)
\end{array}\right)=\left(\begin{array}{c}
w_1\\
w_2\\
w_3\\
w_4
\end{array}\right)\in \mathcal{H},$$
which further implies that  $U_0 \in \mathcal{D}(\mathcal{A})$. The proof is complete.
\end{proof}

 The well-posedness of the system (\ref{105}) is stated as follows.
\begin{theorem}\label{t-1-1}
	Let $m>0$ and  $(\alpha,\beta,\gamma)\in Q$, $\mathcal{H}$ and $\mathcal{A}$ be defined by \eqref{h} and \eqref{a}, respectively.  Then $\mathcal{A}$ generates a $C_{0}$-semigroup $(T(t))_{t\ge 0}$ of contractions on  $\mathcal{H}$.
\end{theorem}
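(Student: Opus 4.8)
The plan is to apply the Lumer–Phillips theorem. Since Lemma~\ref{4.1} already gives that $\mathcal{A}$ is closed and densely defined (density of $\mathcal{D}(\mathcal{A})$ follows from the fact that finite linear combinations of eigenvectors of $A$ lie in every relevant domain), it suffices to verify two things: (i) $\mathcal{A}$ is dissipative on $\mathcal{H}$, and (ii) $\lambda I-\mathcal{A}$ is surjective for some (equivalently, all) $\lambda>0$, i.e. $0\in\rho(\mathcal{A})$ or at least $(0,\infty)\subseteq\rho(\mathcal{A})$.

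For dissipativity, I would take $U=(u,v,\theta,q)^\top\in\mathcal{D}(\mathcal{A})$ and compute $\operatorname{Re}\langle\mathcal{A}U,U\rangle_{\mathcal{H}}$ directly from the definition of the inner product in \eqref{h}. The wave-type terms should telescope: $\sigma(A^{1/2}v,A^{1/2}u)+\sigma(A^{1/2}u,A^{1/2}v)$ together with the $-(I+mA^\gamma)^{-1}(\sigma Au-A^\alpha\theta)$ entry paired against $m(A^{\gamma/2}\cdot,A^{\gamma/2}v)+(\cdot,v)$ — note that $(I+mA^\gamma)$ is exactly the operator whose quadratic form matches $m\|A^{\gamma/2}\cdot\|^2+\|\cdot\|^2$, which is the reason that inner product was chosen — so that the $\sigma Au$ contribution cancels the telescoped wave term and the $A^\alpha\theta$ contribution cancels against the coupling term $-A^\alpha v$ in the $\theta$-equation (after using self-adjointness of $A^\alpha$ to move powers across the inner product). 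Likewise the $A^{\beta/2}q$ and $A^{\beta/2}\theta$ cross terms between the $\theta$- and $q$-equations cancel because of the factor $\tau$ in front of $(q_1,q_2)$. What survives is $\operatorname{Re}\langle\mathcal{A}U,U\rangle_{\mathcal{H}}=-\|q\|^2\le 0$, so $\mathcal{A}$ is dissipative. Care is needed to justify that each formal pairing makes sense, i.e. that the fractional powers land in matching domains; this is guaranteed by $U\in\mathcal{D}(\mathcal{A})$ and the constraint $\alpha>(\beta+1)/2$ defining $Q$, and is most cleanly handled by first checking the computation on finite spectral combinations and passing to the limit.

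For the range condition, I would fix $\lambda>0$ (or attempt $\lambda=0$) and, given $F=(f_1,f_2,f_3,f_4)^\top\in\mathcal{H}$, solve $(\lambda I-\mathcal{A})U=F$ componentwise. From the first equation $v=\lambda u-f_1$; substituting into the second gives, after multiplying through by $(I+mA^\gamma)$, an elliptic-type equation for $u$; the third and fourth let me express $\theta$ and $q$ in terms of $u$ (eliminating $q$ from the last equation via $q=\tfrac{1}{\lambda\tau+1}(-A^{\beta/2}\theta-\tau f_4)$ and feeding this into the third). This reduces the system to a single equation of the form $\mathcal{L}u=g$ where $\mathcal{L}$ is, on each eigenspace of $A$ with eigenvalue $\mu_k$, multiplication by a scalar $p_k(\lambda)$; I would check $p_k(\lambda)\neq 0$ for $\lambda>0$ and that the resulting $U$ genuinely lies in $\mathcal{D}(\mathcal{A})\subseteq\mathcal{H}$ by summing the spectral series and using $(\alpha,\beta,\gamma)\in Q$ to control the fractional-power weights. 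The main obstacle is precisely this last bookkeeping — verifying that the constructed $(u,v,\theta,q)$ satisfies all four domain constraints in $\mathcal{D}(\mathcal{A})$ (in particular $\sigma Au-A^\alpha\theta\in\mathcal{D}(A^{-\gamma/2})$) rather than merely solving the equations formally; once surjectivity of $\lambda I-\mathcal{A}$ is established for a single $\lambda>0$, Lumer–Phillips yields that $\mathcal{A}$ generates a $C_0$-semigroup of contractions.
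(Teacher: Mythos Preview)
Your dissipativity computation and the density argument match the paper's. The point of departure is the range condition: the paper does \emph{not} solve $(\lambda I-\mathcal{A})U=F$ explicitly. Instead it observes that the formal adjoint $\mathcal{A}^{*}$ has exactly the same block structure as $\mathcal{A}$ with only sign changes in the off-diagonal coupling entries, so the same one-line computation gives $\Re\langle\mathcal{A}^{*}U,U\rangle_{\mathcal{H}}=-\|q\|^{2}\le 0$; then \cite[Corollary~I.4.4]{pazy} (closed, densely defined, $\mathcal{A}$ and $\mathcal{A}^{*}$ both dissipative $\Rightarrow$ generator of a contraction semigroup) finishes the proof immediately. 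This bypasses entirely the spectral bookkeeping you flag as ``the main obstacle''.

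Your route is sound in principle, but two remarks. First, the parenthetical ``or attempt $\lambda=0$'' will not work: the very next theorem in the paper (Theorem~\ref{1202}) shows that $\mathcal{A}$ is \emph{not} surjective, so $0\in\sigma(\mathcal{A})$ --- this is precisely the singularity in the title. Second, carrying out your plan for a fixed $\lambda>0$ genuinely requires controlling the inverse of the $4\times 4$ spectral block as $\mu_{k}\to\infty$ and checking that the resulting series for $u,v,\theta,q$ converge in the correct weighted spaces; this is doable (the determinant is the quartic $f(\lambda,\mu_{k})$ of~\eqref{f=0}, which is strictly positive for $\lambda>0$), but it is real work, whereas the adjoint argument costs one line.
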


\begin{proof}
We have shown in Lemma \ref{4.1} that $\mathcal{A}$ is a closed operator.
Note that $\mathcal{D}(\mathcal{A})\supseteq \mathcal{D}_0:=\mathcal{D} (A^{1-\frac{\gamma}{2}})\times\mathcal{D} (A^{\alpha}) \times\mathcal{D} (A^{\alpha-\frac{\gamma}{2}})\times \mathcal{D} (A^{\frac{\beta}{2}})$, and ${ {\overline{\mathcal{D}_0}}=\mathcal{H}} $, thus  $\mathcal{A}$ is densely defined on $\mathcal{H}$.
Furthermore, for any vector $U = (u, v, \theta, q)^\top\in \mathcal{D}(\mathcal{A})$, we have
\begin{align}\label{824}
\Re\langle\mathcal{A}U, U\rangle_{\mathcal{H}} = -\|q\|^2,
\end{align}
indicating that $\mathcal{A}$ is dissipative. By direct calculation, one gets $\mathcal{A}^*:\mathcal{D}(\mathcal{A}^*)\to \mathcal{H} $ is dissipative as well. By \cite[Corollary I.4.4]{pazy}, $\mathcal{A}$ is the generator of a $C_{0}$-semigroup $(T(t))_{t\ge 0}$ of contraction on $\mathcal{H}$.
\end{proof}

We now show that 0 {is} the unique spectral point {of $\mathcal{A}$} on the imaginary axis when $\alpha >\frac{\beta+1}{2}$.
 \begin{theorem}\label{1202}
   {  Let $m>0$ and  $(\alpha,\beta,\gamma)\in Q$, $\mathcal{H}$ and $\mathcal{A}$ be defined by \eqref{h} and \eqref{a}, respectively.}  Then $\sigma(\mathcal{A})\cap i\mathbb{R}=\{0\}$.
 \end{theorem}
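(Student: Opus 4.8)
The strategy is the standard one for showing that a dissipative generator has no purely imaginary eigenvalues other than a forced spectral point: first rule out nonzero points $i\lambda\in i\mathbb{R}$, then separately check that $0\in\sigma(\mathcal{A})$. I would argue by contradiction. Suppose $i\lambda\in\sigma(\mathcal{A})$ with $\lambda\neq 0$. Since $\mathcal{A}$ has compact resolvent (because $A$ does, and the lower-order perturbations and the block structure do not destroy this — one checks $(I+mA^\gamma)^{-1}$ is bounded and the embeddings $\mathcal{D}(A^{1/2})\hookrightarrow H$ etc.\ are compact), every spectral point is an eigenvalue. So there is $0\neq U=(u,v,\theta,q)^\top\in\mathcal{D}(\mathcal{A})$ with $\mathcal{A}U=i\lambda U$. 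Taking the real part of $\langle \mathcal{A}U,U\rangle_{\mathcal H}=i\lambda\|U\|^2$ and using the dissipativity identity \eqref{824}, $\Re\langle\mathcal{A}U,U\rangle_{\mathcal H}=-\|q\|^2=0$, hence $q=0$.

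Now I feed $q=0$ back into the eigenvalue equations coming from \eqref{a}. The fourth equation $\tfrac1\tau(-q-A^{\beta/2}\theta)=i\lambda q$ gives $A^{\beta/2}\theta=0$, and since $A^{\beta/2}$ is injective (as $A$ is positive definite), $\theta=0$. The third equation $-A^\alpha v+A^{\beta/2}q=i\lambda\theta$ then reads $-A^\alpha v=0$, so $v=0$ (again $A^\alpha$ is injective). The first equation $v=i\lambda u$ with $\lambda\neq 0$ forces $u=0$. Hence $U=0$, contradicting $U\neq 0$. Therefore $\sigma(\mathcal{A})\cap i\mathbb{R}\subseteq\{0\}$.

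It remains to show $0\in\sigma(\mathcal{A})$; this is where the hypothesis $\alpha>\tfrac{\beta+1}{2}$ enters, and it is the only genuinely delicate part. If $\mathcal{A}$ were invertible, then for every $F=(f_1,f_2,f_3,f_4)^\top\in\mathcal{H}$ there would be $U\in\mathcal{D}(\mathcal{A})$ with $\mathcal{A}U=F$; writing out the system, $v=f_1$, then from the third equation $A^{\beta/2}q=f_3+A^\alpha f_1$, i.e.\ $q=A^{-\beta/2}f_3+A^{\alpha-\beta/2}f_1$, and from the fourth $A^{\beta/2}\theta=-\tau f_4-q$. The obstruction is in the second equation: $\sigma A u-A^\alpha\theta=-(I+mA^\gamma)w_2$ for the prescribed second component, which determines $u=\tfrac1\sigma A^{-1}\big(A^\alpha\theta-(I+mA^\gamma)f_2\big)$, and one must have $u\in\mathcal{D}(A^{1/2})$, i.e.\ $A^{\alpha-1/2}\theta\in H$. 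But $\theta=-A^{-\beta/2}(\tau f_4+q)$ contains the term $-A^{-\beta/2}\cdot A^{\alpha-\beta/2}f_1=-A^{\alpha-\beta}f_1$, and with $f_1$ ranging over $\mathcal{D}(A^{1/2})$ the requirement $A^{\alpha-1/2}\cdot A^{\alpha-\beta}f_1=A^{2\alpha-\beta-1/2}f_1\in H$ fails for generic $f_1$ precisely because $2\alpha-\beta-1/2>1/2$ when $\alpha>\tfrac{\beta+1}{2}$. Concretely, I would exhibit an explicit $F$ not in the range: take $f_1$ in an eigenbasis $\{e_k\}$ of $A$ (with $Ae_k=\mu_k e_k$, $\mu_k\to\infty$) scaled so that $f_1\in\mathcal{D}(A^{1/2})$ but $A^{2\alpha-\beta-1/2}f_1\notin H$, and set the other components to zero; then no preimage $U\in\mathcal{D}(\mathcal{A})$ exists, so $\mathcal{A}$ is not surjective and $0\in\sigma(\mathcal{A})$. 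Combining the two parts gives $\sigma(\mathcal{A})\cap i\mathbb{R}=\{0\}$. The main obstacle is bookkeeping the fractional-power domain conditions carefully enough to pin down exactly which $F$ lies outside $\mathrm{Ran}(\mathcal{A})$; everything else is routine.
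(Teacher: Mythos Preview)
Your argument that $0\in\sigma(\mathcal{A})$ is essentially the paper's (the paper locates the obstruction slightly earlier, at $q=A^{-\beta/2}(g_3+A^{\alpha}g_1)\in H$, which already forces $A^{\alpha-\beta/2}g_1\in H$ and fails for generic $g_1\in\mathcal{D}(A^{1/2})$ since $\alpha-\tfrac\beta2>\tfrac12$; your downstream obstruction at $u$ is also valid).

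The real gap is in the other half. Your claim that $\mathcal{A}$ has compact resolvent is \emph{false} in the region $Q$. The domain $\mathcal{D}(\mathcal{A})$ is defined through coupled constraints such as $-A^{\alpha}v+A^{\beta/2}q\in H$, and these do not force extra regularity on $q$ alone. Concretely, with $Ae_n=\mu_n e_n$, take
\[
U_n=\bigl(0,\;\mu_n^{-(\alpha-\beta/2)}e_n,\;0,\;e_n\bigr).
\]
Then $-A^{\alpha}v_n+A^{\beta/2}q_n=0$, so $U_n\in\mathcal{D}(\mathcal{A})$, and one checks $\|U_n\|_{\mathcal H}^2\sim\tau$ while $\|\mathcal{A}U_n\|_{\mathcal H}^2\sim 1/\tau$ (the dominant term in both is $\mu_n^{2\alpha-\beta}\cdot\mu_n^{-(2\alpha-\beta)}=1$, using $2\alpha-\beta>1>\gamma$). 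Thus the graph norm is bounded, but $q_n=e_n$ has no convergent subsequence in $H$, so $(I-\mathcal{A})^{-1}$ is not compact.

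Consequently your eigenvector computation only shows that $i\lambda$ is not in the \emph{point} spectrum; it does not exclude residual or continuous spectrum. The paper handles this correctly by using that any $i\lambda\in\sigma(\mathcal{A})$ lies on the boundary of the spectrum (since $\{\Re z>0\}\subset\rho(\mathcal{A})$) and is therefore an approximate eigenvalue: one takes a normalized sequence $U_n\in\mathcal{D}(\mathcal{A})$ with $(i\lambda I-\mathcal{A})U_n\to 0$ and runs essentially your back-substitution with $o(1)$ errors (dissipativity gives $\|q_n\|=o(1)$, then the fourth equation gives $\|A^{\beta/2}\theta_n\|=o(1)$, and so on, with a couple of inner products replacing the exact injectivity steps). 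Your clean algebraic argument survives as the skeleton, but it must be carried out in this approximate form; the compact-resolvent shortcut is not available here.
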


\begin{proof}   We first claim that ${\mathcal{A}}$ is not surjective, then $0\in\sigma(\mathcal{A})$. Otherwise, for any $G:=(g_1,g_2,g_3,g_4)^{\top}\in \mathcal{H}$, there exists
 $U=(u,v,\theta,q)^{\top}\in \mathcal{D}(\mathcal{A})$ such that $\mathcal{A}U=G$.
 Solving the equation gives
\begin{align*}
  v =g_1, \quad
 \theta =-A^{-\frac{\beta}{2}}(\tau g_4+A^{-\frac{\beta}{2}}(g_3+A^{\alpha}g_1)), \quad   q =A^{-\frac{\beta}{2}}(g_3+A^{\alpha}g_1),
\end{align*}
and $$
u =\sigma^{-1}A^{-1}(A^{\alpha}\theta-(I+mA^{\gamma})g_2).
$$
However, from the equation of $q$, one has $A^{\alpha-\frac{\beta}{2}}g_1\in \mathcal{H}$, which is in contradiction with the arbitrariness of  $g_1\in \mathcal{D}(A^{\frac{1}{2}})$. Therefore,  ${\mathcal{A}}$ is not surjective.

 We proceed to prove $ i\mathbb{R}\backslash\{0\}\subseteq \rho(\mathcal{A})$. Suppose that there exists a $\lambda\in \mathbb{R}\backslash\{0\}$ such that $i\lambda\in \sigma(\mathcal{A})$.
	 Then there  exists  a sequence
	$U_n=(u_{n}, v_{n}, \theta_{n}, q_{n})^{\top}\subseteq\mathcal{D}(\mathcal{A})$ with
\begin{align}\label{+2111}
    \|U_n\|_\mathcal{H}=1,\;\; \forall\; n\in \mathbb{N},
\end{align}
 such that
	\begin{equation*}
		\Vert (i\lambda I-\mathcal{A}) U_n\Vert_\mathcal{H} =o(1),\quad n\to\infty,
	\end{equation*}
	i.e.,
	\begin{eqnarray}
		&&i\lambda A^{\frac{1}{2}}u_n- A^{\frac{1}{2}}v_n=o(1) \quad  \mbox{in } \;  H,\label{+g1}\\
		&&A^{-\frac{\gamma}{2}}\left(i\lambda v_n+i\lambda mA^{\gamma}v_n+\sigma A u_n-A^{\alpha}\theta_n\right)=o(1)\quad  \mbox{in } \;   H,\label{+g2}\\
		&&i\lambda\theta_n+A^\alpha v_n-A^{\frac{\beta}{2}}q_n=o(1)\quad  \mbox{in } \;   H,\label{+g3}\\
		&&i\lambda\tau q_n+q_n+A^{\frac{\beta}{2}}\theta_n=o(1)\quad  \mbox{in } \;   H.\label{+g4}
	\end{eqnarray}

{By a direct calculation we get }
	\begin{equation} \label{+k1-}
		\|q_n\|^2=-\Re\langle\mathcal{A}{U_n},{U_n}\rangle_{\mathcal{H}}=\Re\langle(i\lambda I-\mathcal{A}) U_n, U_n\rangle_\mathcal{H}=o(1).
	\end{equation}
Combining (\ref{+g4}) and \eqref{+k1-}, one has
\begin{equation}\label{+209+}
		\|A^{\frac{\beta}{2}}\theta_n\|=o(1), \;\; \|\theta_n\|=o(1).
	\end{equation}
 Note that $\alpha>\frac{\beta+1}{2}$, we obtain the following identity from \eqref{+g3}:
 \begin{align}\label{8241}
     i\lambda A^{\frac{\gamma}{2}-\alpha}\theta_n+A^{\frac{\gamma}{2}} v_n-A^{\frac{\beta+\gamma}{2}-\alpha}q_n=o(1).
 \end{align}
{We deduce from \eqref{+k1-}-(\ref{8241}) that}
	\begin{equation}\label{+216}
		\|A^{\frac{\gamma}{2}}v_n\|=o(1), \;\;\|v_n\|=o(1).
	\end{equation}
Taking the inner product of \eqref{+g3} with $\theta_n$ yields
\begin{align*}
    i\lambda\|\theta_n\|^2+(A^\alpha v_n,\theta_n)-(A^{\frac{\beta}{2}}q_n,\theta_n)=o(1){.}
\end{align*}
 {It} is easy to see from \eqref{+k1-} and (\ref{+209+}) and the above that
\begin{align}\label{1110}
    (A^\alpha\theta_n, v_n)=o(1).
\end{align}
Taking the inner product of \eqref{+g2} with $A^{\frac{\gamma}{2}}v_n$ on $H$,  combining with (\ref{+g1}), we get
	\begin{equation*}
		i\lambda\|v_n\|^2+i\lambda m\|A^{\frac{\gamma}{2}}v_n\|^2-i\lambda \sigma\|A^{\frac{1}{2}}u_n\|^2-(A^\alpha\theta_n,v_n)=o(1),
	\end{equation*}
 {which together with  \eqref{+216} and \eqref{1110}, yields}
\begin{equation}\label{+214+}
		\|A^{\frac{1}{2}}u_n\| =o(1).
	\end{equation}
Therefore, we arrive at the contradiction that {$\|U_n\|_{\mathcal{H}}=o(1)$} according to \eqref{+k1-}, \eqref{+209+}, \eqref{+216}, and \eqref{+214+}.
The proof is complete.
\end{proof}

By the same argument as above, we can prove the well-posedness of system \eqref{101} without inertial term, i.e., when $m=0.$
\begin{theorem}\label{t-1-1}
	Let $\mathcal{H}$ and $\mathcal{A}$ be defined by \eqref{h} and \eqref{a} with $m=0$ and  $(\alpha,\beta)\in Q^\ast$, where the region $Q^*$ is defined by
 (see Figure \ref{12025})
$$Q^*:=\Big\{(\alpha,\beta)\in [0,1]\times [0,1]\Big|\alpha>\frac{\beta+1}{2}\Big\}.$$
 Then $\mathcal{A}$ generates a $C_{0}$-semigroup $(T(t))_{t\ge 0}$ of contractions on  $\mathcal{H}$. Furthermore, $\sigma(\mathcal{A})\cap i\mathbb{R}=\{0\}$.
\end{theorem}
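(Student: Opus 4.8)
The plan is to reproduce, with only cosmetic changes, the arguments already given for the case $m>0$, namely Lemma~\ref{4.1}, the well-posedness result stated above, and Theorem~\ref{1202}. Taking $m=0$ introduces no new difficulty and in fact simplifies the picture: the factor $(I+mA^{\gamma})^{-1}$ collapses to the identity, the summand $m(A^{\gamma/2}v_{1},A^{\gamma/2}v_{2})$ drops out of $\langle\cdot,\cdot\rangle_{\mathcal H}$ so that the phase space reduces to $\mathcal D(A^{1/2})\times H\times H\times H$, and the inertial exponent $\gamma$ disappears from every estimate --- which is precisely why the admissible region shrinks from $Q$ to $Q^{*}$. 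In particular the closedness of $\mathcal A$ requires no new proof, since Lemma~\ref{4.1} is already stated for $m\ge 0$.

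For the generation part I would verify the hypotheses of the Lumer--Phillips theorem in the form \cite[Corollary~I.4.4]{pazy}, exactly as for $m>0$. Density of $\mathcal D(\mathcal A)$ follows from the inclusion of a dense core built from domains of powers of $A$, for instance $\mathcal D_{0}:=\mathcal D(A)\times\mathcal D(A^{\alpha})\times\mathcal D(A^{\alpha})\times\mathcal D(A^{\beta/2})\subseteq\mathcal D(\mathcal A)$, each factor being dense in the corresponding factor of $\mathcal H$ and the inequality $\alpha>\tfrac{\beta+1}{2}$ ensuring that all the regularity constraints defining $\mathcal D(\mathcal A)$ hold on $\mathcal D_{0}$. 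Dissipativity of $\mathcal A$ is the identity $\Re\langle\mathcal AU,U\rangle_{\mathcal H}=-\|q\|^{2}$, obtained by the same cancellations of the coupling terms as in \eqref{824}; a direct computation of $\mathcal A^{*}$, which is structurally the same operator up to a few sign changes, shows it is dissipative as well, and Lumer--Phillips then yields the contraction $C_{0}$-semigroup $(T(t))_{t\ge0}$.

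For the spectral statement $\sigma(\mathcal A)\cap i\mathbb R=\{0\}$ I argue in two steps, mirroring the proof of Theorem~\ref{1202}. To see $0\in\sigma(\mathcal A)$ I solve $\mathcal AU=G$ formally, now with $I$ in place of $I+mA^{\gamma}$; this forces $q=A^{-\beta/2}(g_{3}+A^{\alpha}g_{1})$, so that $q\in H$ would require $A^{\alpha-\beta/2}g_{1}\in H$, and since $\alpha-\tfrac{\beta}{2}>\tfrac12$ this fails for generic $g_{1}\in\mathcal D(A^{1/2})$; hence $\mathcal A$ is not surjective and $0$ is a spectral point. To see $i\mathbb R\setminus\{0\}\subseteq\rho(\mathcal A)$ I suppose $i\lambda\in\sigma(\mathcal A)$ with $\lambda\neq0$, take a sequence $U_{n}=(u_{n},v_{n},\theta_{n},q_{n})^{\top}\in\mathcal D(\mathcal A)$ with $\|U_{n}\|_{\mathcal H}=1$ and $(i\lambda-\mathcal A)U_{n}=o(1)$, and run the estimates \eqref{+k1-}--\eqref{+214+}: the energy identity gives $\|q_{n}\|=o(1)$; the fourth component equation then gives $\|A^{\beta/2}\theta_{n}\|=o(1)$ and $\|\theta_{n}\|=o(1)$; multiplying the third component equation by $A^{-\alpha}$ --- in place of the weight $A^{\gamma/2-\alpha}$ used when $m>0$, which is legitimate because $\tfrac{\beta}{2}-\alpha<0$ --- yields $\|v_{n}\|=o(1)$; pairing the third equation with $\theta_{n}$ gives $(A^{\alpha}v_{n},\theta_{n})=o(1)$; and pairing the second equation with $v_{n}$, combined with the first equation, gives $\|A^{1/2}u_{n}\|=o(1)$. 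This contradicts $\|U_{n}\|_{\mathcal H}=1$, completing the argument.

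The argument is essentially routine, and I do not expect a genuine obstacle. The points that do warrant a little care are the fractional-power bookkeeping in the reduced phase space --- in particular confirming that $A^{-\alpha}$, rather than $A^{\gamma/2-\alpha}$, still carries every error term of the third equation into $o(1)$ --- together with interpreting the coupling pairings through the same $\mathcal D(A^{s})$--$\mathcal D(A^{-s})$ duality used in Theorem~\ref{1202}, the verification that $\mathcal D_{0}\subseteq\mathcal D(\mathcal A)$, and checking that $\mathcal A^{*}$ carries the correct signs. The essential mechanism, however --- the single strict inequality $\alpha>\tfrac{\beta+1}{2}$ simultaneously forcing non-surjectivity at $0$ and excluding purely imaginary spectrum away from $0$ --- is identical to the case $m>0$ already established.
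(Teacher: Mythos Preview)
Your proposal is correct and follows precisely the route the paper itself takes: the paper's proof of this theorem consists of the single sentence ``By the same argument as above, we can prove the well-posedness of system \eqref{101} without inertial term, i.e., when $m=0$,'' and you have faithfully unpacked that sentence, correctly replacing the weight $A^{\gamma/2-\alpha}$ by $A^{-\alpha}$ and dropping all $m$-dependent terms. Your choice of the dense core $\mathcal D_{0}$ and the remaining estimates are all sound.
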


In this paper, we explore the long-time behavior of the solution \( T(\cdot)U_0 \) of system \eqref{105} for initial values \( U_0 \) belonging to \( \mathcal{D}({\mathcal{A}}) \cap \mathcal{R}(\mathcal{A}) \). Note that since $\mathcal{A}$ generates a $C_{0}$-semigroup $(T(t))_{t\ge 0}$ of contractions on  $\mathcal{H}$, $-\mathcal{A}$ is a sectorial operator and thereby $\mathcal{R}({\mathcal{A}}^{s}(I-{\mathcal{A}})^{-(s+t)})=\mathcal{R}({\mathcal{A}}^{s})\cap\mathcal{D}({\mathcal{A}}^{t})$ for $s,t\ge0$ by \cite[Proposition 3.10]{BCT}. The main results are stated as Theorems \ref{thm1} and \ref{thm2}.
\begin{theorem}\label{thm1}
  {  Let $m>0$ and  $(\alpha,\beta,\gamma)\in Q$, $\mathcal{H}$ and $\mathcal{A}$ be defined by \eqref{h} and \eqref{a}, respectively.}   Suppose that $(T(t))_{t\ge 0}$ is the $C_0$-semigroup of contractions generated by ${\mathcal{A}}$.
Then  for $t\ge 1,$
  \begin{align} \label{12021}
  \big\|T(t)
 {\mathcal{A}}(I-{\mathcal{A}})^{-\big(1+\frac{2(2\alpha-\beta-\gamma)}{2\alpha-\gamma}\big)}\big\|
  =O(t^{-1}), \end{align}  and
  \begin{align}\label{12022}
       \big\|T(t){\mathcal{A}}(I-{\mathcal{A}})^{-2}\big\|=O(t^{-\frac{1}{a}}),
        \quad  a=\max\Big\{1,\;\frac{2(2\alpha-\beta-\gamma)}{2\alpha-\gamma}\Big\}.
  \end{align}
 Moreover, the decay rate of \eqref{12022} is sharp if  $\alpha\ge\beta+\frac{\gamma}{2}$.
\end{theorem}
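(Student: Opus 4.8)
The plan is to apply the Borichev--Tomilov–type abstract framework of \cite{BCT} for semigroups whose generator is not invertible but has $0$ as the only imaginary spectral point. Concretely, the decay rate of $\|T(t){\mathcal{A}}(I-{\mathcal{A}})^{-s}\|$ for appropriate $s$ is governed by the growth of the resolvent $\|(i\lambda-{\mathcal{A}})^{-1}\|$ as $|\lambda|\to\infty$ (behaviour at infinity) and as $\lambda\to0$ (behaviour near the singularity at $0$), combined with the already-established facts that $i\mathbb{R}\setminus\{0\}\subseteq\rho({\mathcal{A}})$ (Theorem~\ref{1202}) and that $(T(t))_{t\ge0}$ is a bounded $C_0$-semigroup. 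So the real work is two resolvent estimates:
\begin{align}\label{plan-inf}
\|(i\lambda-{\mathcal{A}})^{-1}\|=O\big(|\lambda|^{\frac{2(2\alpha-\beta-\gamma)}{2\alpha-\gamma}}\big),\quad |\lambda|\to\infty,
\end{align}
and a matching estimate of the form $\|(i\lambda-{\mathcal{A}})^{-1}\|=O(|\lambda|^{-c})$ as $\lambda\to0$ for a suitable exponent $c$. Feeding \eqref{plan-inf} into the Borichev--Tomilov machinery gives the decay $O(t^{-1})$ in \eqref{12021} after the precise shift $1+\frac{2(2\alpha-\beta-\gamma)}{2\alpha-\gamma}$ is inserted; combining both the infinity and the near-zero estimates and taking the worse of the two exponents yields \eqref{12022} with $a=\max\{1,\frac{2(2\alpha-\beta-\gamma)}{2\alpha-\gamma}\}$.

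For the estimate at infinity I would argue by contradiction in the usual frequency-domain style: assume a sequence $\lambda_n\to\infty$ and $U_n\in\mathcal{D}({\mathcal{A}})$ with $\|U_n\|_{\mathcal{H}}=1$ and $\lambda_n^{\,p}(i\lambda_n-{\mathcal{A}})U_n=:F_n\to0$ in $\mathcal{H}$, where $p=\frac{2(2\alpha-\beta-\gamma)}{2\alpha-\gamma}$, and derive $\|U_n\|_{\mathcal{H}}\to0$. Writing out the four component equations exactly as in \eqref{+g1}--\eqref{+g4} (but now with $\lambda_n^{-p}F_n$ on the right), the dissipation identity gives $\|q_n\|^2=\Re\langle(i\lambda_n-{\mathcal{A}})U_n,U_n\rangle=o(\lambda_n^{-p})$, hence $\|q_n\|=o(\lambda_n^{-p/2})$. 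From the $q$-equation one then controls $\|A^{\beta/2}\theta_n\|$; the key is to bootstrap through the coupling term $A^{\alpha}$, using the hypothesis $\alpha>\frac{\beta+1}{2}$ and interpolation inequalities for powers of $A$, to pass the smallness of $q_n$ and $\theta_n$ up to $\|A^{\gamma/2}v_n\|$, $\|v_n\|$, and finally $\|A^{1/2}u_n\|$, picking up powers of $\lambda_n$ at each step. Carefully tracking the powers one finds that the worst loss is exactly $\lambda_n^{\,p}$ with $p=\frac{2(2\alpha-\beta-\gamma)}{2\alpha-\gamma}$, which is what makes that exponent appear; this bookkeeping of powers of $\lambda_n$ is the main obstacle and where the precise value of the exponent has to be pinned down. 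For the near-zero estimate one performs the analogous contradiction argument with $\lambda_n\to0$; here the operator $A^{\gamma}$-terms are harmless and the singularity comes from inverting $\sigma A u-A^{\alpha}\theta$ through the factor $A^{-1}$, so one expects a polynomial blow-up whose exponent again depends on $\alpha,\beta,\gamma$ and turns out not to dominate when $\alpha\ge\beta+\frac{\gamma}{2}$.

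Finally, for sharpness of \eqref{12022} when $\alpha\ge\beta+\frac{\gamma}{2}$ (so that $a=\frac{2(2\alpha-\beta-\gamma)}{2\alpha-\gamma}$, the ``infinity'' exponent being the dominant one), I would exhibit a quasi-eigenvector sequence realizing the lower bound: take $\phi_k$ to be the normalized eigenvectors of $A$ with eigenvalues $\mu_k\to\infty$, make an ansatz $U_k=(a_k\phi_k,b_k\phi_k,c_k\phi_k,d_k\phi_k)^{\top}$ with $\lambda_k\sim\mu_k^{\,r}$ for the appropriate $r$, plug into $(i\lambda_k-{\mathcal{A}})U_k$, and solve the resulting $4\times4$ scalar system for $(a_k,b_k,c_k,d_k)$ so that $\|(i\lambda_k-{\mathcal{A}})U_k\|_{\mathcal{H}}/\|U_k\|_{\mathcal{H}}\sim\mu_k^{-p}\sim\lambda_k^{-p}$ with $p=a$. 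This shows the resolvent bound \eqref{plan-inf} is optimal, and by the converse direction of the Borichev--Tomilov theorem (the decay rate $t^{-1/a}$ cannot be improved to $o(t^{-1/a})$ when the resolvent genuinely grows like $|\lambda|^{a}$), the polynomial decay rate in \eqref{12022} is sharp. The condition $\alpha\ge\beta+\frac{\gamma}{2}$ is precisely what guarantees the behaviour at infinity, rather than the behaviour near $0$, dictates $a$, so that the lower-bound construction at high frequencies is the binding one.
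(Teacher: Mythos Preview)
Your plan is correct and matches the paper's approach: apply Lemma~\ref{0s} by proving the two resolvent estimates by contradiction, using the dissipation identity to control $q_n$, then bootstrapping via the component equations and interpolation (Lemma~\ref{lemma-inter}) to kill $\theta_n$, $v_n$, $u_n$ in turn. Two points of comparison are worth noting. First, the near-zero estimate in the paper is simply $l=1$, independent of $(\alpha,\beta,\gamma)$; it is obtained by the same inner-product contradiction machinery as at infinity (writing $\eta_n=s_n^{-1}\to\infty$ and tracking $\eta_n$-powers), not by ``inverting through $A^{-1}$'' as your heuristic suggests---so the exponent in \eqref{12021} is exactly $l+k=1+\frac{2(2\alpha-\beta-\gamma)}{2\alpha-\gamma}$ and the $1$ in $a=\max\{1,k\}$ comes from this universal $l=1$. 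Second, for sharpness the paper computes the actual eigenvalue asymptotics of ${\mathcal{A}}$ from the quartic characteristic equation, obtaining branches with $|\Re\lambda_{i,n}|\sim\tfrac12|\Im\lambda_{i,n}|^{-k}$, and then argues that a faster decay would (via the converse in Lemma~\ref{0s} and a Neumann-series strip argument) force these eigenvalues into the resolvent set; your quasi-eigenvector construction would also work and is essentially the same idea, but the eigenvalue route is slightly more direct here since $A$ has compact resolvent.
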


\begin{remark}
It is obvious that \eqref{12022} is a consequence of \eqref{12021} if $\alpha<\beta+\frac{\gamma}{2}$, the optimal decay rate of \eqref{12022} in this case is unclear.
\end{remark}

The following theorem gives the stability of the abstract coupled thermoelastic system \eqref{101} without inertial term.

\begin{figure}
    \centering
    \includegraphics[width=7cm]{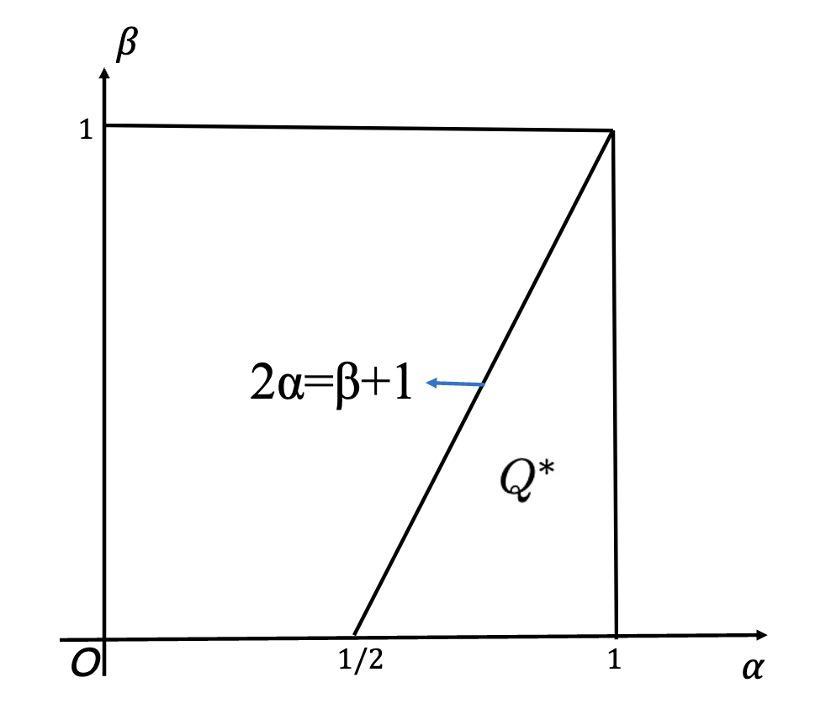}
    \caption{The region of $Q^*$.}
    \label{12025}
\end{figure}
\begin{theorem}\label{thm2}
   Let $m=0$ and  $( \alpha,\beta)\in Q^*$, $\mathcal{H}$ and $\mathcal{A}$ be defined by \eqref{h} and \eqref{a}, respectively.   Suppose that $(T(t))_{t\ge 0}$ is the $C_0$-semigroup of contractions generated by $\mathcal{A}$. Then 	 for $t\ge 1,$
  \begin{align} \label{12023}
      \big\|T(t){\mathcal{A}}(I-{\mathcal{A}})^{-2}\big\|=
      O(t^{-\frac{\alpha}{2\alpha-\beta}}).
  \end{align} Moreover, the decay rate of \eqref{12023} is sharp.
\end{theorem}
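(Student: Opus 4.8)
The plan is to deduce \eqref{12023} from two quantitative resolvent estimates for $\mathcal{A}$ on the imaginary axis together with the abstract decay results of \cite{BCT}, and then to establish optimality via a matching resolvent lower bound. By Theorem~\ref{t-1-1} (in the $m=0$, $(\alpha,\beta)\in Q^{*}$ case) $\mathcal{A}$ generates a bounded $C_{0}$-semigroup with $\sigma(\mathcal{A})\cap i\mathbb{R}=\{0\}$, so $(i\lambda-\mathcal{A})^{-1}$ exists for $\lambda\in\mathbb{R}\setminus\{0\}$ and is locally bounded there; what remains is its rate of growth as $|\lambda|\to\infty$ and its rate of blow-up as $\lambda\to0$. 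I would prove
\begin{equation*}
\|(i\lambda-\mathcal{A})^{-1}\|_{\mathcal{L}(\mathcal{H})}\le C|\lambda|^{\frac{2\alpha-\beta}{\alpha}}\quad(|\lambda|\ge1),\qquad \|(i\lambda-\mathcal{A})^{-1}\|_{\mathcal{L}(\mathcal{H})}\le C|\lambda|^{-1}\quad(0<|\lambda|\le1),
\end{equation*}
the second bound being of first order in $\lambda$, reflecting the first-order nature of the obstruction to surjectivity found in the proof of Theorem~\ref{1202}. Since $\mathcal{A}(I-\mathcal{A})^{-2}=\mathcal{A}^{1}(I-\mathcal{A})^{-(1+1)}$ has range $\mathcal{R}(\mathcal{A})\cap\mathcal{D}(\mathcal{A})$ by \cite[Proposition~3.10]{BCT}, the estimates of \cite{BCT} for $T(t)\mathcal{A}^{s}(I-\mathcal{A})^{-(s+t)}$ applied with $s=t=1$ convert the pair above into $\|T(t)\mathcal{A}(I-\mathcal{A})^{-2}\|=O(t^{-\alpha/(2\alpha-\beta)})$: the factor $\mathcal{A}$ absorbs the low-frequency singularity, so the rate is dictated by the high-frequency exponent $\tfrac{2\alpha-\beta}{\alpha}>1$ (note $\alpha>\beta$ throughout $Q^{*}$, since $\alpha>\tfrac{\beta+1}{2}\ge\beta$).

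The technical core is the high-frequency bound, obtained by the frequency-domain (multiplier) method applied to the stationary problem $(i\lambda-\mathcal{A})U=F$ with $m=0$, i.e. for $U=(u,v,\theta,q)^{\top}$, $F=(f_{1},f_{2},f_{3},f_{4})^{\top}$,
\begin{equation*}
i\lambda u-v=f_{1},\quad i\lambda v+\sigma Au-A^{\alpha}\theta=f_{2},\quad i\lambda\theta+A^{\alpha}v-A^{\frac{\beta}{2}}q=f_{3},\quad i\lambda\tau q+q+A^{\frac{\beta}{2}}\theta=f_{4}.
\end{equation*}
Dissipativity \eqref{824} gives $\|q\|^{2}=\Re\langle(i\lambda-\mathcal{A})U,U\rangle\le\|F\|_{\mathcal{H}}\|U\|_{\mathcal{H}}$. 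From the fourth equation one controls $\|A^{\beta/2}\theta\|$, hence $\|\theta\|$; multiplying the third equation by $A^{-\beta/2}$ and using $\alpha-\tfrac{\beta}{2}>\tfrac12$ controls $\|A^{\alpha-\beta/2}v\|$, hence $\|A^{1/2}v\|$ and $\|v\|$; the first equation then yields $\|A^{1/2}u\|$; and testing the second equation against $v$, combined with $v=i\lambda u-f_{1}$ and the key pairing $(A^{\alpha}\theta,v)=(A^{\beta/2}\theta,A^{\alpha-\beta/2}v)$ (which offloads the coupling onto the two quantities already controlled), closes the estimate. The delicate point is that the coupling term $(A^{\alpha}\theta,u)$ cannot be absorbed by the available regularity \emph{precisely because} $\alpha>\tfrac{\beta+1}{2}$; one must therefore choose the multipliers so that the powers of $\lambda$ conspire to give exactly $\tfrac{2\alpha-\beta}{\alpha}$ before absorbing $\|q\|\le\|F\|_{\mathcal{H}}^{1/2}\|U\|_{\mathcal{H}}^{1/2}$ by Young's inequality. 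The low-frequency bound is obtained by the same scheme with $|\lambda|\le1$, where $(\alpha,\beta)\in Q^{*}$ is exactly what makes the argument of Theorem~\ref{1202} degenerate and quantifies the blow-up; I also expect, as in region $Q^{*}$ for the Fourier law, that the value of $\sigma\tau$ plays no role here because the strong coupling dominates.

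For optimality I would construct a matching resolvent lower bound along a sequence $\lambda_{n}\to\infty$ and invoke the necessity part of the polynomial-decay theory (a Borichev--Tomilov-type converse adapted to a semigroup whose imaginary spectrum is $\{0\}$, available in \cite{BCT}). Taking normalized eigenvectors $e_{n}$ of $A$ with $Ae_{n}=\mu_{n}e_{n}$, $\mu_{n}\to\infty$, and the ansatz $U_{n}=(a_{n}e_{n},b_{n}e_{n},c_{n}e_{n},d_{n}e_{n})^{\top}$, one solves three of the four scalar equations exactly and chooses $\lambda_{n}$ according to the characteristic scaling $\lambda_{n}\asymp\mu_{n}^{\theta}$ (the exponent $\theta$ being the one that balances $\mu_{n}^{\alpha}$, $\mu_{n}^{\beta/2}$ and $\lambda_{n}$ in the reduced system), so that $\|(i\lambda_{n}-\mathcal{A})U_{n}\|_{\mathcal{H}}/\|U_{n}\|_{\mathcal{H}}$ is of order $|\lambda_{n}|^{-(2\alpha-\beta)/\alpha}$; this shows the high-frequency exponent cannot be lowered and hence that $\alpha/(2\alpha-\beta)$ in \eqref{12023} is sharp. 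The main obstacle I anticipate is the bookkeeping in the high-frequency chain — making the several powers of $\lambda$ produce precisely $\tfrac{2\alpha-\beta}{\alpha}$, neither more nor less, and matching this with the quasi-mode construction — together with verifying that the low-frequency singularity is genuinely absorbed by the single factor $\mathcal{A}$ so that it does not degrade the rate.
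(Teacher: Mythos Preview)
Your overall strategy---establishing the resolvent bounds $\|(i\lambda-\mathcal{A})^{-1}\|=O(|\lambda|^{k})$ for $|\lambda|\to\infty$ with $k=\tfrac{2\alpha-\beta}{\alpha}$ and $\|(i\lambda-\mathcal{A})^{-1}\|=O(|\lambda|^{-1})$ near zero, then invoking Lemma~\ref{0s}---is exactly the paper's approach, and your observation that $k>1$ on all of $Q^{*}$ (so the high-frequency exponent governs the rate) is correct. The low-frequency order $l=1$ is right, and your quasi-mode plan for optimality is essentially equivalent to the paper's eigenvalue asymptotics combined with the converse direction of Lemma~\ref{0s}.

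There is, however, a genuine gap in the high-frequency chain you describe. The step ``the fourth equation controls $\|A^{\beta/2}\theta\|$, hence $\|\theta\|$'' followed by ``multiply the third equation by $A^{-\beta/2}$ to control $\|A^{\alpha-\beta/2}v\|$'' loses too much: in the contradiction framework with $k=\tfrac{2\alpha-\beta}{\alpha}$ one only gets $\|A^{\beta/2}\theta\|=\lambda^{1-k/2}o(1)=\lambda^{\beta/(2\alpha)}o(1)$, and the crude inequality $\|\theta\|\le C\|A^{\beta/2}\theta\|$ is therefore \emph{not} $o(1)$ when $\beta>0$. Consequently $\|A^{\alpha-\beta/2}v\|\le|\lambda|\,\|A^{-\beta/2}\theta\|+\|q\|+o(1)$ blows up like $\lambda^{1+\beta/(2\alpha)}$, the pairing $(A^{\beta/2}\theta,A^{\alpha-\beta/2}v)$ is of order $\lambda^{1+\beta/\alpha}$, and after dividing by $\lambda$ in the energy identity from the second equation you are left with a residual $\lambda^{\beta/\alpha}$. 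The argument does not close at the claimed exponent; as written it would only yield some $k'>k$.

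What is missing is an interpolation step (Lemma~\ref{lemma-inter}) that exploits the \emph{a~priori} boundedness of $\|v\|$. The paper multiplies the third equation by $A^{-\alpha}$ (not $A^{-\beta/2}$) and uses $\|v\|\le\|U\|=1$ to obtain $\|A^{-\alpha}\theta\|=O(\lambda^{-1})$; interpolating this negative-order bound against $\|A^{\beta/2}\theta\|=\lambda^{\beta/(2\alpha)}o(1)$ gives $\|\theta\|=o(1)$, the powers cancelling precisely because $k=\tfrac{2\alpha-\beta}{\alpha}$. One then multiplies the \emph{second} equation by $A^{-\alpha}$ to get $\|\lambda A^{-\alpha}v\|=O(1)$, and tests the third equation against $A^{-\alpha}v$ to conclude $\|v\|=o(1)$. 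Only after these two steps does the energy identity from testing the second equation with $v$ (using the third equation once more to rewrite $\lambda^{-1}A^{\alpha}v$) yield $\|A^{1/2}u\|=o(1)$. In short, the key idea you are lacking is to manufacture a negative-order estimate on $\theta$ from the boundedness of $v$ and then interpolate; your chain through $\|A^{\alpha-\beta/2}v\|$ works for the low-frequency case (where there is no growing $\lambda$ to fight) but not for the high-frequency one.
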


We shall prove Theorems \ref{thm1} and \ref{thm2} by estimating the norm of the corresponding resolvent operator along the imaginary axis, especially at zero and infinity on the imaginary axis. Our proof is based on the following result { on frequency characteristics for polynomial-stable semigroup}.

 \begin{lemma}(\cite[Theorem 8.4]{BCT})\label{0s}
    Let $(T(t))_{t\ge 0}$ be a bounded $C_0$-semigroup on a Hilbert space $\mathcal{H}$ with generator $\mathcal{A}$. Assume that $\sigma(\mathcal{A})\cap i\mathbb{R}=\{0\}$ and that there exist $l\ge 1$, $k>0$ such that
  \begin{align}\label{8232}
     \big \|(is I-\mathcal{A})^{-1}\big\|=\left\{ \begin{aligned}
    &O(|s|^{-l}),&s\to 0,\\
    &O(|s|^{k}),&|s|\to \infty.
    \end{aligned}\right.
  \end{align}
  Then
  \begin{align*} \big\|T(t)\mathcal{A}^{l}(I-\mathcal{A})^{-(l+k)}\big\|=O(t^{-1}),\;t\to \infty,\end{align*}  and
  \begin{align}\label{8231}
      \big \|T(t)\mathcal{A}(I-\mathcal{A})^{-2}\big\|=O(t^{-\frac{1}{a}}),\;t\to \infty,
  \end{align}
  where $a=\max\{l,k\}$.

  Conversely, if (\ref{8231}) holds for some $a>0$, then \eqref{8232} holds for $l=\max\{a,1\}$ and $k=a$.
\end{lemma}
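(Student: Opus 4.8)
The plan is to treat the two implications separately, adapting the quantified Tauberian scheme of Batty--Duyckaerts and Borichev--Tomilov to the presence of the single spectral point at the origin. For the forward direction I would realise the regularised semigroup through the holomorphic functional calculus available for the generator $\mathcal{A}$ of a bounded semigroup. Writing $f(z)=z^{l}(1-z)^{-(l+k)}$, one has $T(t)f(\mathcal{A})=(e^{t\cdot}f)(\mathcal{A})$, which admits a contour representation $\frac{1}{2\pi i}\int_{\Gamma}e^{\lambda t}f(\lambda)(\lambda-\mathcal{A})^{-1}\,d\lambda$; since $\sigma(\mathcal{A})\cap i\mathbb{R}=\{0\}$ this contour can be pushed onto the imaginary axis except for a small circular detour around $0$. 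The shape of $f$ is dictated by the two resolvent bounds: the factor $z^{l}$ vanishes to order $l$ at the origin and absorbs the singular growth $|s|^{-l}$ of $(isI-\mathcal{A})^{-1}$ there (this is where $l\ge1$ is used), while $(1-z)^{-(l+k)}$ decays like $|s|^{-(l+k)}$ at infinity, simultaneously taming the growth $|s|^{k}$ and making the integral absolutely convergent.

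The decay rate then emerges from splitting the boundary integral into a low-frequency range $|s|\lesssim 1$ and a high-frequency range $|s|\gtrsim 1$. At high frequency $f(\mathcal{A})$ differs from $(I-\mathcal{A})^{-k}$ only by a bounded factor, so the Borichev--Tomilov estimate for resolvent growth $O(|s|^{k})$ with $k$ powers of smoothing gives the sharp rate $O(t^{-1})$. At low frequency the situation is the mirror image: near $0$ the operator behaves like $\mathcal{A}^{l}$, and the singular bound $O(|s|^{-l})$ yields, by the dual argument with the roles of $0$ and $\infty$ (and of $\mathcal{A}$ and $(I-\mathcal{A})^{-1}$) interchanged, again $O(t^{-1})$. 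Adding the two contributions proves the first estimate. The second estimate uses the weaker regulariser $\mathcal{A}(I-\mathcal{A})^{-2}$, which carries only one power of smoothing at each end; the same split now gives $O(t^{-1/k})$ from infinity and $O(t^{-1/l})$ from zero, and the slower of the two dominates, producing $O(t^{-1/\max\{l,k\}})=O(t^{-1/a})$. Equivalently, this follows from the first estimate by a moment/interpolation inequality.

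For the converse I would transfer decay back to resolvent growth. Regularising by $B=\mathcal{A}(I-\mathcal{A})^{-2}$ and using the Laplace-transform identity relating $(isI-\mathcal{A})^{-1}$ to $\int_{0}^{\infty}e^{-ist}T(t)\,dt$, the hypothesised rate $O(t^{-1/a})$ controls $\|B(isI-\mathcal{A})^{-1}\|$ uniformly in $s$; since the symbol $\lambda\mapsto\lambda(1-\lambda)^{-2}$ is bounded below on any fixed annulus and behaves like $|s|$ near $0$ and like $|s|^{-1}$ near infinity, dividing out recovers $\|(isI-\mathcal{A})^{-1}\|=O(|s|^{k})$ at infinity with $k=a$ and $\|(isI-\mathcal{A})^{-1}\|=O(|s|^{-l})$ near zero with $l=\max\{a,1\}$, the floor $l\ge1$ being forced by the simple zero of $B$ at the origin. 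A contradiction argument through approximate eigenvectors gives the same bounds and may be easier to make rigorous.

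The principal obstacle is the interaction of the two frequency regimes caused by the genuine spectral point at $0$. In the classical Borichev--Tomilov setting $i\mathbb{R}\subset\rho(\mathcal{A})$ and the contour slides cleanly onto the axis; here the resolvent blows up at the origin, so the deformation must route around it, and one must verify that the circular detour contributes no more than the claimed rate and that a single choice of regulariser serves both regimes. Establishing the near-zero analogue of the Borichev--Tomilov estimate---effectively a Tauberian theorem at a finite singular point rather than at infinity---and confirming that the combined rate is exactly $t^{-1/\max\{l,k\}}$ rather than a product of the two individual rates, are the delicate points on which the argument turns.
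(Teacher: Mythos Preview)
The paper does not prove this lemma at all: it is quoted verbatim from \cite[Theorem~8.4]{BCT} and used as a black box, with no argument supplied. There is therefore nothing in the paper to compare your proposal against.

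That said, your sketch is a reasonable outline of the Batty--Chill--Tomilov machinery from which the cited result comes: contour deformation with a detour at the origin, the dual roles of the regularisers $\mathcal{A}^{l}$ (cancelling the low-frequency blow-up) and $(I-\mathcal{A})^{-(l+k)}$ (absorbing the high-frequency growth), and interpolation to pass from the first estimate to the second. For the converse, the cleaner route in the literature is not the Laplace-transform identity you mention---that integral need not converge without extra hypotheses---but rather the approximate-eigenvector/contradiction argument you allude to at the end, together with the observation that the map $t\mapsto\|T(t)\mathcal{A}(I-\mathcal{A})^{-2}\|$ is essentially decreasing, so a rate $t^{-1/a}$ forces the corresponding resolvent bounds. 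If you want to flesh this out, consult \cite{BCT} directly; within the present paper the lemma is purely a tool and no proof is expected.
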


 At the end of this section, we present the following interpolation lemma (see, e.g., \cite{dhkz} and \cite{liuzheng}), which will be utilized in subsequent sections.
\begin{lemma}\label{lemma-inter}
  Let $A \,:\, \mathcal{ D}(A) \subseteq H$ be self-adjoint and positive definite, $r,\;p\;,q\in \mathbb{R}$. Then
  \begin{align}\label{4.41}
      \|A^p x\| \le \|A^q x\|^{p-r\over q-r} \|A^r x\|^{q-p\over q-r},  \quad
\forall \;  r\le p \le q, \; x\in \mathcal{ D}(A^q).
  \end{align}
 \end{lemma}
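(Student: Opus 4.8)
The plan is to deduce \eqref{4.41} directly from the spectral theorem for the self-adjoint operator $A$, reducing the whole statement to a single application of H\"older's inequality against a scalar measure. First I would dispose of the degenerate case $q=r$: the hypothesis $r\le p\le q$ then forces $p=q=r$, and \eqref{4.41} holds with equality, so henceforth I may assume $q>r$, which makes all the displayed exponents well defined.

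Since $A$ is self-adjoint and positive definite, its spectrum is contained in $[c,\infty)$ for some $c>0$, and the spectral theorem provides a projection-valued measure $E$ with $A^{s}=\int_{[c,\infty)}\mu^{s}\,dE(\mu)$ for every real $s$. Fixing $x\in\mathcal{D}(A^{q})$, I would introduce the finite positive Borel measure $\nu(\cdot)=(E(\cdot)x,x)$ on $[c,\infty)$, for which $\|A^{s}x\|^{2}=\int_{[c,\infty)}\mu^{2s}\,d\nu(\mu)$. Because $\mu\ge c>0$ on the support of $\nu$ and $r\le p\le q$, one has $\mu^{2p}\le\max\{\mu^{2r},\mu^{2q}\}$ pointwise; hence the finiteness of $\|A^{q}x\|^{2}$ together with the finite total mass $\nu([c,\infty))=\|x\|^{2}$ guarantees that all three integrals converge. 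In particular $x\in\mathcal{D}(A^{p})\cap\mathcal{D}(A^{r})$, so both sides of \eqref{4.41} are finite and the statement is meaningful.

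The core step is convexity. Writing $\theta=\frac{p-r}{q-r}\in[0,1]$, so that $1-\theta=\frac{q-p}{q-r}$ and $p=\theta q+(1-\theta)r$, I would factor the integrand as $\mu^{2p}=(\mu^{2q})^{\theta}(\mu^{2r})^{1-\theta}$ and apply H\"older's inequality with the conjugate exponents $1/\theta$ and $1/(1-\theta)$ to obtain
\begin{align*}
\|A^{p}x\|^{2}=\int\mu^{2p}\,d\nu\le\Big(\int\mu^{2q}\,d\nu\Big)^{\theta}\Big(\int\mu^{2r}\,d\nu\Big)^{1-\theta}=\|A^{q}x\|^{2\theta}\,\|A^{r}x\|^{2(1-\theta)}.
\end{align*}
Taking square roots and substituting the values of $\theta$ and $1-\theta$ yields \eqref{4.41} exactly.

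The only genuine subtlety is the endpoint behavior $\theta\in\{0,1\}$, i.e.\ $p=r$ or $p=q$, where one H\"older exponent becomes infinite; in each of these cases \eqref{4.41} collapses to the trivial identity $\|A^{p}x\|\le\|A^{p}x\|$ and may simply be excluded from the H\"older step. Beyond this, I do not anticipate a substantial obstacle: once the spectral representation is in place the estimate is a one-line consequence of H\"older, and the only care required is the bookkeeping of domains and the finiteness of the integrals, which is already secured by positive definiteness and the assumption $x\in\mathcal{D}(A^{q})$.
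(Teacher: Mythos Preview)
Your argument is correct: the spectral representation reduces the inequality to H\"older on a scalar measure, and your handling of domains and the degenerate endpoints is fine. The paper itself does not give a proof of this lemma at all---it simply records the statement and points to \cite{dhkz,liuzheng}---so there is no ``paper's approach'' to compare against; what you have written is precisely the standard proof one would find behind those citations.
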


\begin{remark}
   The study of stability of abstract thermoelastic system \eqref{101} has been extensively covered in \cite{dhkz,liure,han} and this paper. In \cite{liure}, the exponential stability region of system \eqref{101} was obtained. However, later in \cite{han}, Han, Kuang, and Zhang demonstrated that the exponentially stable region could be extended if $\sigma\tau = 1$. Furthermore, they investigated polynomial stability of system \eqref{101} when $(\alpha,\beta,\gamma)\in Q^c := [0,1] \times [0,1] \times (0,1] \setminus Q$ taking certain specific values. Following this track, \cite{dhkz} provided a detailed analysis of the polynomial stability of system \eqref{101} for every $(\alpha,\beta,\gamma)\in Q^c$. Finally, this paper considers the polynomial stability of system \eqref{101} when $(\alpha,\beta,\gamma)\in Q$.

In all the aforementioned papers, the proof techniques are similar, utilizing frequency domain methods combined with interpolation Lemma \ref{lemma-inter} and detailed spectral analysis. The key difference is that for the case $(\alpha,\beta,\gamma)\in Q^c$, the results were proved by contradiction based on \cite{bobtov1}, which requires $i\mathbb{R}\subseteq \rho(\mathcal{A})$. However, when $(\alpha,\beta,\gamma)\in Q$, since 0 is in the spectrum of the generator $\mathcal{A}$, we use Lemma \ref{0s} instead of \cite{bobtov1} to prove the result. Another difference is that the method used to prove the well-posedness of system \eqref{101} in this paper (Theorem \ref{t-1-1}) is relatively new.

\end{remark}

\section{Stability of system with inertial term (Proof of theorem \ref{thm1}) }
\label{mneq0}

In this section, we focus on analyzing the polynomial stability of the system \eqref{101}, considering \(m>0\), specifically aiming to prove Theorem \ref{thm1}. According to Lemma \ref{0s}, it is sufficient to show that for $k=\frac{2(2\alpha-\beta-\gamma)}{2\alpha-\gamma},\;l= 1$, the following holds:
\begin{align}    \label{7141}
\begin{array}{ll}
     \|s(isI-\mathcal{A})^{-1}\|&=O(1),\;\; s\to 0, \\
     \|\lambda^{-k}(i\lambda I-\mathcal{A})^{-1}\|&=O(1),\;\; \lambda\to \infty.
\end{array}
\end{align}

 If \eqref{7141} fails,  there exists  a sequence $(\eta_n, \lambda_{n},  U_{1,n}, U_{2,n}  )_{n\ge1}$, where $\eta_n:=s_n^{-1},\lambda_{n}\in {\mathbb R} ,  U_{j,n}:=(u_{j,n}, {v}_{j,n}, \theta_{j,n},\, q_{j,n})^{\top}\in \mathcal{D}({\cal A}),j=1,2$ satisfying
\begin{equation}\label{unitnorm}
\|U_{j,n}\|_{\cal H}  =\big\| (u_{j,n}, {v}_{j,n}, \theta_{j,n}, q_{j,n})^{\top}\big\|_{ \cal H }  =1,\; j=1,2,
\end{equation}
such that  as $ n\to\infty,\eta_{n},\lambda_{n}\to\infty$, and
 \begin{align}\label{8246}
   (i I-\eta_n{\mathcal{A}})U_{1,n}=o(1), \;\;\;\;\qquad
      \lambda_{n}^{k}\big( i\, \lambda_{n}I- {\mathcal{A}}\big)   U_{2,n} =o(1)
   \;   \mbox{ in  }  \; {\cal H}.
\end{align}
Equivalently, we have
\begin{eqnarray}
&&iA^{\frac{1}{2}}u_{1,n}-\eta_n A^{\frac{1}{2}}v_{1,n}=o(1)\quad  \;  \mbox{ in  }  \;  H,\label{0s3}\\   \noalign{\medskip}  \displaystyle
  &&A^{-\frac{\gamma}{2}}\left(iv_{1,n}+im A^{\gamma}v_{1,n}+\sigma \eta_nA u_{1,n}-\eta_nA^{\alpha}\theta_{1,n}\right)=o(1)\quad  \;  \mbox{ in  }  \;  H,\label{0s4}\\\noalign{\medskip}  \displaystyle
  &&i\theta_{1,n}-\eta_nA^{\frac{\beta}{2}}q_{1,n}+\eta_nA^\alpha v_{1,n}=o(1)\quad  \;  \mbox{ in  }  \;  H,\label{0s5}\\\noalign{\medskip}  \displaystyle
  &&i\tau q_{1,n}+\eta_nq_{1,n}+\eta_nA^{\frac{\beta}{2}}\theta_{1,n}=o(1)\quad  \;  \mbox{ in  }  \;  H.\label{0s6}
\end{eqnarray}
and
\begin{eqnarray}
&&\lambda_n^k(i\lambda_nA^{\frac{1}{2}}u_{2,n}- A^{\frac{1}{2}}v_{2,n})=o(1)\quad  \;  \mbox{ in  }  \;  H,\label{s1+}\\   \noalign{\medskip}  \displaystyle
  &&\lambda_n^kA^{-\frac{\gamma}{2}}\left(i\lambda_nv_{2,n}+i\lambda_n m A^{\gamma}v_{2,n}+\sigma A u_{2,n}-A^{\alpha}\theta_{2,n}\right)=o(1)\quad  \;  \mbox{ in  }  \;  H,\label{s2+}\\\noalign{\medskip}  \displaystyle
  &&\lambda_n^k(i\lambda_n\theta_{2,n}-A^{\frac{\beta}{2}}q_{2,n}+A^\alpha v_{2,n})=o(1)\quad  \;  \mbox{ in  }  \;  H,\label{s3+}\\\noalign{\medskip}  \displaystyle
  &&\lambda_n^k(i\lambda_n\tau  q_{2,n}+ q_{2,n}+A^{\frac{\beta}{2}}\theta_{2,n})=o(1)\quad  \;  \mbox{ in  }  \;  H.\label{s4+}
\end{eqnarray}
We shall prove $ \|U_{j,n}\|=o(1),\; j=1,2$, which contradicts to the assumption \eqref{unitnorm}. The proof is structured into two steps.

\noindent{\it Step 1.} We claim that $ \|U_{1,n}\|=o(1).$

By (\ref{824}) and the first identity of \eqref{8246}, it is easy to see
\begin{align}\label{0s7}
    \|q_{1,n}\|=\eta_n^{-\frac{1}{2}}o(1).
\end{align}
According to (\ref{0s6}) and \eqref{0s7}, we get
\begin{align}\label{0s8}
    \|A^{\frac{\beta}{2}}\theta_{1,n}\|=\eta_n^{-\frac{1}{2}}o(1).
\end{align}
Therefore,
\begin{align}\label{0s9}
    \|\theta_{1,n}\|=o(1).
\end{align}

Combining $\alpha>\frac{1}{2}$ and $\eqref{unitnorm}$, one has $\|A^{-\alpha+\gamma}v_{1,n}\|$ is bounded. Thus, taking the inner product of (\ref{0s5}) with $\eta^{-1}_nA^{-\alpha+\gamma}v_{1,n}$, we have
\begin{align}\label{0s10}
    i\eta^{-1}_{n}(\theta_{1,n},A^{-\alpha+\gamma}v_{1,n})-(A^{\frac{\beta}{2}}q_{1,n},A^{-\alpha+\gamma}v_{1,n})+\|A^{\frac{\gamma}{2}}v_{1,n}\|^2=o(1).
\end{align}
The first term of (\ref{0s10}) tends to 0 because of \eqref{0s9} and the boundedness of  $\|A^{-\alpha+\gamma}v_{1,n}\|$. The second term of (\ref{0s10}) tends to 0 because of \eqref{0s7} and   $-\alpha+\frac{\beta}{2}+\gamma<\frac{\gamma}{2}$. Therefore,
\begin{align}\label{0s11}
    \|A^{\frac{\gamma}{2}}v_{1,n}\|=o(1),\; \;  \mbox{and then   }  \;   \|v_{1,n}\|=o(1).
\end{align}

From (\ref{0s5}), we see $  i\eta_n^{-\frac{1}{2}}A^{-\frac{\beta}{2}}\theta_{1,n}-\eta_n^{\frac{1}{2}}q_{1,n}+\eta_n^{\frac{1}{2}}A^{\alpha-\frac{\beta}{2}}v_{1,n}=o(1).$ Thus, by \eqref{0s7} and \eqref{0s9}, we get
\begin{align}\label{0s12}
    \|\eta_n^{\frac{1}{2}}A^{\alpha-\frac{\beta}{2}}v_{1,n}\|=o(1).
\end{align}
Taking the inner product of \eqref{0s4} with $A^{\frac{\gamma}{2}}v_{1,n}$ on $H$ yields
	\begin{equation}\label{8253}
		i\|v_{1,n}\|^2+im\|A^{\frac{\gamma}{2}}v_{1,n}\|^2+(\sigma A^{\frac{1}{2}}u_{1,n},\eta_nA^{\frac{1}{2}}v_{1,n})-(\eta_n A^\alpha \theta_{1,n},v_{1,n})=o(1).
	\end{equation}
By \eqref{0s3} and \eqref{8253}, we get
 \begin{equation}\label{0s13}
		i\|v_{1,n}\|^2+im\|A^{\frac{\gamma}{2}}v_{1,n}\|^2-i\sigma \|A^{\frac{1}{2}}u_{1,n}\|^2-(\eta_n A^\alpha \theta_{1,n},v_{1,n})=o(1).
	\end{equation}
Recalling  \eqref{0s8} and \eqref{0s12}, one has
	\begin{equation*}
		(\eta_n A^\alpha\theta_{1,n},v_{1,n})\leq\|\eta_n^{\frac{1}{2}}A^{\frac{\beta}{2}}\theta_{1,n}\|\|\eta_n^{\frac{1}{2}}A^{\alpha-\frac{\beta}{2}}v_{1,n}\|=o(1).
	\end{equation*}

Combining this, (\ref{0s11}) and (\ref{0s13}), we obtain
	\begin{equation}\label{0s14}
		\|A^{\frac{1}{2}}u_{1,n}\| =o(1).
	\end{equation}
	In summary, by \eqref{0s7}, \eqref{0s9}, \eqref{0s11} and \eqref{0s14}, we conclude
	$$\|U_{1,n}\|_{\mathcal{H}}=\|(u_{1,n}, v_{1,n}, \theta_{1,n}, q_{1,n})\|_{\mathcal{H}}=o(1).$$

 \noindent{{\it Step 2.} We claim that $ \|U_{2,n}\|_{\mathcal{H}}=o(1)$.}

  By \eqref{824} and the second identity of \eqref{8246}, it is easy to see
\begin{align}\label{v1_0}
   \|q_{2,n}\|= \lambda_n^{-\frac{k}{2}} o(1).
\end{align}
According to (\ref{s4+}) and \eqref{v1_0}, we get
\begin{align}\label{v1_8}
   \| A^{\frac{\beta}{2}}\theta_{2,n}\|=\lambda_n^{1-\frac{k}{2}}o(1).
\end{align}

Since $\alpha>\frac{\beta+1}{2}$, \eqref{s3+} implies
$$ i\lambda_nA^{-\alpha+\frac{\gamma}{2}}\theta_{2,n}-A^{-\alpha+\frac{\gamma}{2}+\frac{\beta}{2}}q_{2,n}+A^{\frac{\gamma}{2}} v_{2,n} =\lambda_n^{-k}o(1).
$$
Recalling \eqref{unitnorm} and \eqref{v1_0}, we obtain from the above that
 \begin{equation}\label{v1_1}
 	\|\lambda_nA^{-\alpha+\frac{\gamma}{2}}\theta_{2,n}\|=O(1).
\end{equation}
{By interpolation we deduce from \eqref{v1_8} and \eqref{v1_1} that}
\begin{equation}\label{v1_2}
\left\| \theta_{2,n}\right\| \leq \|A^{\frac{\beta}{2}} \theta_{2,n}\|^{\frac{2\alpha-\gamma}{2\alpha+\beta-\gamma}} \| A^{-\alpha+\frac{\gamma}{2}} \theta_{2,n} \|^{\frac{\beta}{2\alpha+\beta-\gamma}}=o(1).
\end{equation}

Taking the inner product of \eqref{s2+} with $\lambda_n^{-k}A^{-\alpha+\frac{\gamma}{2}}\theta_{2,n}$, \eqref{s3+} with $\lambda_n^{-k}A^{-\alpha}(I+mA^{\gamma})v_{2,n}$ and then adding them, we get
\begin{align}\label{v1_5}
\begin{array}{lll}
    & {\Re}(\sigma\theta_{2,n},\, A^{1-\alpha}u_{2,n})-{\Re}{( q_{2,n},\, A^{\frac{\beta}{2}-\alpha}(I+mA^{\gamma})v_{2,n})}\\
&\quad -\|\theta_{2,n}\|^2+\|v_{2,n}\|^2+m\|A^{\frac{\gamma}{2}}v_{2,n}\|^2=\lambda_n^{-k}o(1).
\end{array}
\end{align}
The first two terms in \eqref{v1_5} tend to 0 because of \eqref{v1_0}, \eqref{v1_2} and the boundedness of $\|A^{\frac{1}{2}}u_{2,n}\|$ and $\|A^{\gamma\over 2} v_{2,n}\|$.
These, along with (\ref{v1_2}) and \eqref{v1_5}, imply
\begin{equation}\label{v1_10}
	\|v_{2,n}\|,~~\|A^{\frac{\gamma}{2}}v_{2,n}\| =o(1).
\end{equation}

One can deduce from \eqref{s2+} that
\begin{align*}
iA^{-\frac{1}{2}}v_{2,n}+ imA^{\gamma-\frac{1}{2}}v_{2,n}+\sigma\lambda_n^{-1}A^{\frac{1}{2}}u_{2,n}-\lambda_n^{-1}A^{\alpha-\frac{1}{2}}\theta_{2,n}=\lambda_n^{-k-1}o(1),
\end{align*}
{which along with}  \eqref{v1_10} and the boundedness of $\|A^{\frac{1}{2}}u_{2,n}\|$, {yields}
\begin{align}\label{8261}
    \|\lambda_n^{-1}A^{\alpha-\frac{1}{2}}\theta_{2,n}\|=o(1).
\end{align}
Now, taking  the inner product of \eqref{s2+}  with ${\lambda_n^{-k}}A^{\frac{\gamma}{2}}u_{2,n}$ on $H$, along with \eqref{s1+}, one has
 \begin{equation}\label{316-}
	-\|v_{2,n}\|^2-m\|A^{\frac{\gamma}{2}}v_{2,n}\|^2 +\sigma\|A^{\frac{1}{2}} u_{2,n}\|^2 -(A^{\alpha}\theta_{2,n}, u_{2,n})
	 = o(1).
\end{equation}
Note that by \eqref{s1+} and (\ref{8261}), we have
\begin{equation}\label{8262}
\begin{array}{ll}
     (A^{\alpha}\theta_{2,n},\, u_{2,n})&= i(\lambda_n^{-1}A^{\alpha}\theta_{2,n},\,v_{2,n})+o(1)\\
&= i(A^{\frac{\beta}{2}}\theta_{2,n},\,\lambda_n^{-1}A^{\alpha-\frac{\beta}{2}}v_{2,n})+o(1).
\end{array}
\end{equation}
Furthermore, by $\eqref{s3+}$, we see
\begin{equation}\label{8251}
\lambda_n^{-1}A^{\alpha-\frac{\beta}{2}} v_{2,n}= -iA^{-\frac{\beta}{2}}\theta_{2,n}+ \lambda_n^{-1}q_{2,n}+\lambda_n^{-k-1}o(1){.}
\end{equation}
 Substituting \eqref{8251} into \eqref{8262} yields
\begin{align*}
    &(A^{\alpha}\theta_{2,n},\, u_{2,n})
    =-\|\theta_{2,n}\|^2+ i(\lambda_n^{-1}A^{\frac{\beta}{2}}\theta_{2,n},\,q_{2,n})=o(1),
\end{align*}
where we use \eqref{v1_0}, \eqref{v1_8} and \eqref{v1_2}.
 Combining this, \eqref{v1_10} and (\ref{316-}), we obtain
\begin{equation}\label{m018}
	\|A^{\frac{1}{2}}u_{2,n}\|={o(1)}.
\end{equation}
Recalling \eqref{v1_0}, \eqref{v1_2}, \eqref{v1_10} and \eqref{m018}, we get $\|U_{2,n}\|_{\mathcal{H}}=o(1)$, which {contradicts} (\ref{unitnorm}). Therefore, {by the above two steps, we have proved that the assumption \eqref{7141} holds with $k=\frac{2(2\alpha-\beta-\gamma)}{2\alpha-\gamma},\;l= 1$.} As a result, {thanks to Lemma \ref{0s},} the semigroup $T(t) $ satisfies \eqref{12021}-\eqref{12022} when $t\to \infty$.

At the end of this section, we show that the decay order is sharp if $\alpha\ge\beta+\frac{\gamma}{2}$, i.e., $k\ge1$ by analyzing the eigenvalues.
Noticing that $A$ is a self-adjoint, positive definite operator with compact resolvent. Thus, there exists a sequence of eigenvalues $\{\m_n\}_{n\ge1}$  of $A$ such that
\begin{equation*}
0<\m_1\le\m_2\le\cdots\le\m_n\le\cdots,~~\qquad\lim_{n\to\infty}\m_n=\infty.
\end{equation*}
The eigenvalues $\lambda$ of operator ${\cal A}$ satisfy the following quartic equation:
\begin{equation}
\label{f=0}
\begin{array}{lll}
f(\l,\m_n) &\equiv (m\tau \cdot \mu_n^\gamma + \tau ) \lambda^4 + (m \cdot  \mu_n^\gamma +1) \lambda^3 \\
& \quad + (\tau \cdot \mu_n^{2\alpha} + m \cdot \mu_n^{\beta+\gamma} + \sigma \tau \cdot \mu_n +  \mu_n^\beta) \lambda^2\\
 & \quad  + (\mu_n^{2\alpha}+\sigma \cdot \mu_n) \lambda + \sigma \cdot \mu_n^{1+\beta}\\
&=0.
\end{array}
\end{equation}
Without loss of generality, choose $\sigma=2$ and $\tau=1$. Taking $m=1$ we derive the characteristic equation of the system from (\ref{f=0}):
\begin{equation}
\label{eq:characteristic-with-inertial}
(\mu_n^\gamma + 1 ) \lambda_n^4 + ( \mu_n^\gamma +1) \lambda_n^3 + (  \mu_n^{2\alpha} + \mu_n^{\beta+\gamma} + 2 \mu_n +  \mu_n^\beta) \lambda_n^2 + (\mu_n^{2\alpha}+2 \mu_n) \lambda_n + 2 \mu_n^{1+\beta}=0,
\end{equation}
where $\lambda_n$ is the eigenvalue of $\mathcal{A}$ associated with $\mu_n \rightarrow \infty$ as $n \rightarrow \infty$. We aim to solve \eqref{eq:characteristic-with-inertial}  in the setting $\mu_n \rightarrow \infty$. Using a similar method as shown in \cite{dhkz,kuang, han, haoliu2}, we get that  the solutions to \eqref{eq:characteristic-with-inertial} when $(\alpha,\beta,\gamma)\in Q$ are as follows:
\begin{align*}
\lambda_{1,n}&=-\frac{1}{2}\m_n^{-2\a+\beta+\g}(1+o(1)) + i\m_n^{\a-\frac{\g}{2}}(1+o(1)), \\
\lambda_{2,n}&=-\frac{1}{2}\m_n^{-2\a+\beta+\g}(1+o(1)) - i\m_n^{\a-\frac{\g}{2}}(1+o(1)), \\
\lambda_{3,n}&=			-2\m_n^{-2\a+\beta+1}(1+o(1)),\\
\lambda_{4,n}&=			-1(1+o(1)).
\end{align*}
It is easy to see from $\lambda_{1,n}$ and $\lambda_{2,n}$ that
\begin{equation}
\label{o1}
|\Re \lambda_{i,n} |= \frac{1}{2}| \Im   \lambda_{i,n} |^{-{ k}}, \quad
\;  \mbox{ when  }  \; (\alpha, \; \beta,\; \gamma) \in Q,\; i=1,2.
\end{equation}

The following proposition gives the sharpness of the decay rate.
\begin{proposition}\label{12.4}
   Let { the} conditions in Theorem \ref{thm1} hold.  Suppose $\alpha\ge\beta+\frac{\gamma}{2}$. Then the decay rate in \eqref{12022} is sharp.
\end{proposition}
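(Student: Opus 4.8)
The plan is to establish sharpness of the decay rate in \eqref{12022} by combining the converse direction of Lemma \ref{0s} with an explicit lower bound on the resolvent norm near infinity, extracted from the eigenvalue asymptotics $\lambda_{1,n},\lambda_{2,n}$ already computed. Concretely, recall that by the converse part of Lemma \ref{0s}, if $\|T(t)\mathcal{A}(I-\mathcal{A})^{-2}\| = O(t^{-1/a})$ for some $a>0$, then the resolvent estimate \eqref{8232} must hold with $l=\max\{a,1\}$ and $k=a$. In the regime $\alpha\ge\beta+\frac\gamma2$ we have $k=\frac{2(2\alpha-\beta-\gamma)}{2\alpha-\gamma}\ge 1$, hence $a=\max\{1,k\}=k$, and the positive direction (proved above via \eqref{7141}) gives the decay rate $O(t^{-1/k})$. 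So it suffices to show that \eqref{8232} \emph{fails} for any exponent $k' < k$ at infinity; equivalently, that $\|(i\lambda I - \mathcal{A})^{-1}\|$ is \emph{not} $O(|\lambda|^{k'})$ as $|\lambda|\to\infty$ for $k'<k$. This then forces $a\ge k$ in any polynomial decay statement of the form \eqref{8231}, which is exactly the sharpness claim.

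First I would use the eigenvalue branch $\lambda_{1,n} = -\frac12\mu_n^{-2\alpha+\beta+\gamma}(1+o(1)) + i\mu_n^{\alpha-\gamma/2}(1+o(1))$, with associated eigenvector $\Phi_n\in\mathcal{D}(\mathcal{A})$, normalized so that $\|\Phi_n\|_{\mathcal{H}}=1$. Set $s_n := \Im\lambda_{1,n} \sim \mu_n^{\alpha-\gamma/2}\to\infty$. The point $i s_n$ is close to the spectral point $\lambda_{1,n}$, and the distance is $|is_n - \lambda_{1,n}| = |\Re\lambda_{1,n}|(1+o(1)) = \tfrac12 \mu_n^{-2\alpha+\beta+\gamma}(1+o(1))$. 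Since $\mathcal{A}$ is the generator of a contraction semigroup and $\lambda_{1,n}$ is an eigenvalue, the resolvent satisfies the standard lower bound $\|(is_n I - \mathcal{A})^{-1}\| \ge \frac{1}{\operatorname{dist}(is_n,\sigma(\mathcal{A}))} \ge \frac{1}{|is_n-\lambda_{1,n}|} = 2\mu_n^{-(-2\alpha+\beta+\gamma)}(1+o(1)) = 2\mu_n^{2\alpha-\beta-\gamma}(1+o(1))$. Rewriting in terms of $s_n$: since $s_n\sim\mu_n^{\alpha-\gamma/2}$, i.e. $\mu_n\sim s_n^{1/(\alpha-\gamma/2)} = s_n^{2/(2\alpha-\gamma)}$, we get $\|(is_nI-\mathcal{A})^{-1}\| \gtrsim \mu_n^{2\alpha-\beta-\gamma} = s_n^{\frac{2(2\alpha-\beta-\gamma)}{2\alpha-\gamma}}(1+o(1)) = s_n^{k}(1+o(1))$. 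This is precisely the relation \eqref{o1} rephrased as a resolvent lower bound. Consequently \eqref{8232} cannot hold at infinity with any exponent $k'<k$.

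To finish, I would argue by contradiction: suppose \eqref{12022} held with a better rate, i.e. $\|T(t)\mathcal{A}(I-\mathcal{A})^{-2}\| = O(t^{-1/a'})$ for some $a' < a = k$. Then by the converse in Lemma \ref{0s}, \eqref{8232} would hold with $k = a' < k$ at infinity (and $l = \max\{a',1\}$ near zero), contradicting the lower bound just established. Hence no exponent smaller than $k$ is achievable, and since the positive direction already gives $O(t^{-1/k})$, the rate in \eqref{12022} (which equals $t^{-1/a}$ with $a = \max\{1,k\} = k$ in this regime) is sharp. I expect the main technical point — though it is quite standard — to be the justification that an eigenvalue $\lambda_{1,n}$ of $\mathcal{A}$ with $\Re\lambda_{1,n}\to 0^-$ and $|\Im\lambda_{1,n}|\to\infty$ forces the claimed resolvent blow-up along the imaginary axis; this rests on the elementary bound $\|(zI-\mathcal{A})^{-1}\|\ge \operatorname{dist}(z,\sigma(\mathcal{A}))^{-1}$ together with the precise asymptotics of $|\Re\lambda_{1,n}|$ versus $|\Im\lambda_{1,n}|$ recorded in \eqref{o1}. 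One should also note in passing that the eigenvectors $\Phi_n$ can be taken to have unit norm so that the contradiction with a putative polynomial bound is clean, but since we only use the distance-to-spectrum estimate, even this is not strictly needed.
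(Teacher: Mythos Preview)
Your proof is correct and follows essentially the same strategy as the paper: both combine the converse direction of Lemma \ref{0s} with the eigenvalue asymptotics \eqref{o1} to rule out any improved decay exponent. The only cosmetic difference is that you invoke the standard bound $\|(zI-\mathcal{A})^{-1}\|\ge\operatorname{dist}(z,\sigma(\mathcal{A}))^{-1}$ directly to obtain a lower bound on the resolvent along the imaginary axis, whereas the paper rederives this via an explicit Neumann-series strip argument (showing $S_\lambda\subseteq\rho(\mathcal{A})$ and then placing $\lambda_{1,n}$ inside $S_{\Im\lambda_{1,n}}$); these are two phrasings of the same fact.
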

\begin{proof}
By the proof of Theorem \ref{thm1}, estimation (\ref{8232}) in Lemma \ref{0s} holds with $k=\frac{2(2\alpha-\beta-\gamma)}{2\alpha-\gamma},\;l=1$.
    If $\alpha\ge\beta+\frac{\gamma}{2}$, then $k\geq1$. Consequently, we have
    \begin{align*}
       \|T(t){\mathcal{A}}(I-{\mathcal{A}})^{-2}\|=O(t^{-\frac{1}{k}}),\;t\to \infty.
  \end{align*}
 We only prove the case when $k>1$, the case when $k=1$ is similar. If the decay rate can be improved, in other words, there exists $\varepsilon>0$ small enough such that $k-\varepsilon>1$ and $ \|T(t){\mathcal{A}}(I-{\mathcal{A}})^{-2}\|=O(t^{-\frac{1}{k-\varepsilon}})$, then by Lemma \ref{0s}, one has
  \begin{align*}
      \|(i\lambda I-{\mathcal{A}})^{-1}\|=\left\{ \begin{aligned}
    &O(|\lambda|^{-k+\varepsilon}),&\lambda\to 0,\\
    &O(|\lambda|^{k-\varepsilon}),&|\lambda|\to \infty.
    \end{aligned}\right.
  \end{align*}
In particular, there exists a constant $C>1$ such that
\begin{equation}
\label{c1}\|(i\lambda I-{\mathcal{A}})^{-1}\|\leq C|\lambda|^{k-\varepsilon},\;\; |\lambda|\to \infty.
\end{equation}
Let $S_{\lambda}:=\big\{r+i\lambda\,\big|\,|r|\leq \frac{1}{2C|\lambda|^{k-\varepsilon}},\; r,\;\lambda\in {\mathbb R\backslash\{0\}}\big\}$, then for any $s\in S_{\lambda}$,
  \begin{align*}
      \|(sI-{\mathcal{A}})^{-1}\|&=\|(i\lambda I-{\mathcal{A}})^{-1}(I+r(i\lambda I-{\mathcal{A}})^{-1})^{-1}\|\\
     & \leq \|(i\lambda I-{\mathcal{A}})^{-1}\|\frac{1}{1-\|r(i\lambda I-{\mathcal{A}})^{-1}\|}\\
     & \leq 2\|(i\lambda I-\mathcal{A})^{-1}\|\\
    &  \leq 2C|\lambda|^{k-\varepsilon},\;\;\;|\lambda|\to \infty,
  \end{align*}
  which implies that $S_{\lambda}\subseteq \rho({\mathcal{A}})$ for $|\lambda|$ big enough.

  On the other hand, recalling that there exists a sequence $(\lambda_n)_{n\ge 1}\subseteq\sigma({\mathcal{A}}),\;|\lambda_n|\to \infty$ such that \eqref{o1} holds. For the constant $C$ in \eqref{c1} and an arbitrary positive constant $\varepsilon$, we choose   $\lambda_n$   such that $|\Im \lambda_n|^{-\varepsilon} \leq \frac{1}{C}$, then
  $$|\Re \lambda_{n}| = \frac{1}{2}| \Im   \lambda_{n} |^{-{ k}}
  \leq \frac{1}{2C|\Im\lambda_n|^{k-\varepsilon}}.$$
  Thus, $\lambda_n\in S_{\lambda_n}$ which contradicts $\lambda_n\in \sigma({\mathcal{A}})$.
\end{proof}

\section{Stability of system without inertial term (Proof of theorem \ref{thm2})}\label{m=0}
\setcounter{equation}{0}
In this section, we shall analyze the polynomial stability of system \eqref{101} without inertial term, i.e.,   prove Theorem \ref{thm2}. By Lemma  \ref{0s}, it is sufficient to show that \eqref{8232} holds with $k=\frac{2\alpha-\beta}{\alpha},\; l= 1$. Similar to the argument in Section \ref{mneq0},  we still prove this theorem by contradiction. Suppose (\ref{8232}) fails, then there at least exists  a sequence $\{ \eta_n,\lambda_{n}, U_{1,n},U_{2,n}  \}_{n=1}^\infty
\subseteq {\mathbb R}^2\times \mathcal{D}({\cal A})^2  $ such that \eqref{unitnorm}-\eqref{8246} hold with $m=0$. In other words, we have
\begin{eqnarray}
&&iA^{\frac{1}{2}}u_{1,n}- \eta_nA^{\frac{1}{2}}v_{1,n}=o(1)\quad  \;  \mbox{ in  }  \;  H,\label{0s3+}\\   \noalign{\medskip}  \displaystyle
  &&iv_{1,n}+\sigma\eta_n A u_{1,n}-\eta_nA^{\alpha}\theta_{1,n}=o(1)\quad  \;  \mbox{ in  }  \;  H,\label{0s4+}\\\noalign{\medskip}  \displaystyle
  &&i\theta_{1,n}-\eta_nA^{\frac{\beta}{2}}q_{1,n}+\eta_nA^\alpha v_{1,n}=o(1)\quad  \;  \mbox{ in  }  \;  H,\label{0s5+}\\\noalign{\medskip}  \displaystyle
  &&i\tau q_{1,n}+ \eta_nq_{1,n}+\eta_nA^{\frac{\beta}{2}}\theta_{1,n}=o(1)\quad  \;  \mbox{ in  }  \;  H.\label{0s6+}
\end{eqnarray}
and
\begin{eqnarray}
&&\lambda_n^k(i\lambda_nA^{\frac{1}{2}}u_{2,n}- A^{\frac{1}{2}}v_{2,n})=o(1)\quad  \;  \mbox{ in  }  \;  H,\label{s1+1}\\   \noalign{\medskip}  \displaystyle
  &&\lambda_n^k \left(i\lambda_nv_{2,n} +\sigma A u_{2,n}-A^{\alpha}\theta_{2,n}\right)=o(1)\quad  \;  \mbox{ in  }  \;  H,\label{s2+1}\\\noalign{\medskip}  \displaystyle
  &&\lambda_n^k(i\lambda_n\theta_{2,n}-A^{\frac{\beta}{2}}q_{2,n}+A^\alpha v_{2,n})=o(1)\quad  \;  \mbox{ in  }  \;  H,\label{s3+1}\\\noalign{\medskip}  \displaystyle
  &&\lambda_n^k(i\lambda_n\tau  q_{2,n}+ q_{2,n}+A^{\frac{\beta}{2}}\theta_{2,n})=o(1)\quad  \;  \mbox{ in  }  \;  H.\label{s4+1}
\end{eqnarray}

We are devoted to showing that $ \|U_{j,n}\|_{\mathcal{H}}=o(1),\; j=1,2$, which {contradicts} the assumption \eqref{unitnorm}.
We first prove $ \|U_{1,n}\|_{\mathcal{H}}=o(1).$ Recalling that $\mathcal{A}$ is dissipative, \eqref{8246} and (\ref{0s6+}), we see
\begin{align}\label{0s7+}
    \|q_{1,n}\|=\eta_n^{-\frac{1}{2}}o(1), \; \; \|A^{\frac{\beta}{2}}\theta_{1,n}\|=\eta_n^{-\frac{1}{2}}o(1).
\end{align}
Therefore,
\begin{align}\label{0s9+}
    \|\theta_{1,n}\|=o(1).
\end{align}

Taking the inner product of (\ref{0s5+}) with $\eta^{-1}_nA^{-\alpha}v_{1,n}$, we have
\begin{align}\label{0s10+}
    i\eta^{-1}_{n}(\theta_{1,n},A^{-\alpha}v_{1,n})-(A^{\frac{\beta}{2}}q_{1,n},A^{-\alpha}v_{1,n})+\|v_{1,n}\|^2=o(1).
\end{align}
By Cauchy-Schwarz inequality and \eqref{0s7+}-\eqref{0s10+}, we get
\begin{align}\label{0s11+}
 \|v_{1,n}\|=o(1){.}
\end{align}
 Repeating the proof of \eqref{0s12}, we can deduce from \eqref{0s5+} and \eqref{0s7+} that
\begin{align}\label{0s12+}
    \|A^{\alpha-\frac{\beta}{2}}v_{1,n}\|=\eta_n^{-\frac{1}{2}}o(1).
\end{align}
Taking the inner product of \eqref{0s4+} with $v_{1,n}$ on $H$, along with \eqref{0s3+}, one has
	\begin{equation}\label{0s13+}
		i\|v_{1,n}\|^2-i\sigma\|A^{\frac{1}{2}}u_{1,n}\|^2-(\eta_n A^\alpha \theta_{1,n},v_{1,n})=o(1).
	\end{equation}
By \eqref{0s7+} and \eqref{0s12+}, the last term of (\ref{0s13+}) goes to 0 as $n\to \infty$. This together with (\ref{0s11+}) implies
\begin{equation}\label{0s14+}
		\|A^{\frac{1}{2}}u_{1,n}\| =o(1).
	\end{equation}
	In summary, by \eqref{0s7+}, \eqref{0s9+}, \eqref{0s11+} and \eqref{0s14+}, we obtain
	$$\|U_{1,n}\|_{\mathcal{H}}=\|(u_{1,n}, v_{1,n}, \theta_{1,n}, q_{1,n})\|_{\mathcal{H}}=o(1).$$

We proceed to prove $\|U_{2,n}\|_{\mathcal{H}}=o(1)$. We obtain from (\ref{824}) and (\ref{s4+1}) that
 \begin{align}\label{7142}
   \|q_{2,n}\|= \lambda_n^{-\frac{k}{2}} o(1),\;\; \| A^{\frac{\beta}{2}}\theta_{2,n}\|=\lambda_n^{1-\frac{k}{2}}o(1)
\end{align}
 as in Section 3. Note that $-\alpha+\frac{\beta}{2}< 0$, then by \eqref{unitnorm} and \eqref{s3+1}, {we get}
 \begin{equation*}
 \|A^{-\alpha}\theta_{2,n}\|=\lambda_n^{-1}O(1).
 \end{equation*}
Combining this and \eqref{7142} yields
 \begin{equation}\label{717}
 	\left\| \theta_{2,n}\right\| \leq \left\| A^{-\alpha} \theta_{2,n}\right\| ^{\frac{\beta}{2\alpha+\beta}}\|A^{\frac{\beta}{2}} \theta_{2,n}\|^{\frac{2\alpha}{2\alpha+\beta}}=o(1).
 \end{equation}

  It follows from \eqref{s2+1} that
\begin{align*}
    i \lambda_nA^{-\alpha} v_{2,n} +\sigma A^{1-\alpha}u_{2,n} - \theta_{2,n}=\lambda_n^{-k}o(1).
\end{align*}
Since $\alpha>\frac{1}{2}$, combining the above, \eqref{unitnorm} and \eqref{717}, one has
\begin{align}\label{8263}
     \| \lambda_nA^{-\alpha} v_{2,n}\|=O(1).
\end{align}

Taking the inner product of \eqref{s3+1} with $\lambda_{n}^{-k}A^{-\alpha}v_{2,n}$ yields
\begin{equation}\label{m06}
	(i\lambda_n\theta_{2,n}, A^{-\alpha}v_{2,n})-(A^{\frac{\beta}{2}-\alpha}q_{2,n}, v_{2,n})+\|v_{2,n}\|^2 =o(1).
\end{equation}

We see the first term of (\ref{m06}) tends to 0 because of \eqref{717} and \eqref{8263}, and the second term tends to 0 because of  \eqref{7142}. Thus,
\begin{equation}\label{m07}
\|v_{2,n}\|= o(1).
\end{equation}

Moreover, by taking the inner product of \eqref{s2+1} with $\lambda_n^{-k-1}v_{2,n}$, together with \eqref{s1+1}, we get
 \begin{equation*}
	i\|v_{2,n}\|^2-i\sigma\|A^{\frac{1}{2}} u_{2,n}\|^2 -(\theta_{2,n}, \lambda_n^{-1}A^{\alpha}v_{2,n})
	 = o(1).
\end{equation*}
By \eqref{s3+1}, we have $\lambda_{n}^{-1}A^{\alpha}v_{2,n}=-i\theta_n+\lambda_{n}^{-1}A^{\frac{\beta}{2}}q_{2,n}+\lambda_{n}^{-1-k}o(1)$. Substituting this into the above equation yields
\begin{equation}\label{8265}
	i\|v_{2,n}\|^2-i\sigma\|A^{\frac{1}{2}} u_{2,n}\|^2 -i\|\theta_{2,n}\|^2- (\lambda_n^{-1}A^{\frac{\beta}{2}}\theta_{2,n}, q_{2,n})
	 = o(1).
\end{equation}
 Therefore, we conclude from \eqref{7142}, \eqref{717},  \eqref{m07} and (\ref{8265}) that
\begin{equation}\label{8267}
	\|A^{\frac{1}{2}}u_{2,n}\|={o(1)}.
\end{equation}
Recalling \eqref{7142}, \eqref{717}, \eqref{m07} and \eqref{8267}, we get $\|U_{2,n}\|_{\mathcal{H}}=o(1)$, which {contradicts to} (\ref{unitnorm}).  Therefore,  the assumption \eqref{8232} holds with $k=\frac{2\alpha-\beta}{\alpha},\; l= 1$.

Finally,  we shall prove the decay order is sharp by a similar argument as  Section \ref{mneq0}. Note that $k=\frac{2\alpha-\beta}{\alpha}\ge 1$ {always holds}. Suppose   $\mu_n,\; \lambda_n,\;  n=1,2,\cdots$ are the eigenvalues of operators $A$ and ${\cal A}$, respectively.  Then repeat the process at the end part of Section \ref{mneq0}, we have the following quartic equation by taking $m=0,\sigma=2,\tau=1$ in \eqref{f=0},
\begin{equation}
\label{eq:characteristic-without-inertial}
\lambda_n^4 + \lambda_n^3 + ( \mu_n^{2\alpha} +  2 \mu_n +  \mu_n^\beta) \lambda_n^2 + (\mu_n^{2\alpha}+ 2 \mu_n) \lambda_n + 2 \mu_n^{1+\beta}=0.
\end{equation}
The solutions to \eqref{eq:characteristic-without-inertial} when $(\alpha,\beta)\in Q^*$ are the following:
\begin{align*}
\lambda_{1,n}&=-\frac{1}{2}\m_n^{\beta-2\a}(1+o(1))+ i\m_n^{\a}(1+o(1)), \\
\lambda_{2,n}&=-\frac{1}{2}\m_n^{\beta-2\a}(1+o(1))-i\m_n^{\a}(1+o(1)), \\
\lambda_{3,n}&=			-2\m_n^{-2\a+\beta+1}(1+o(1)),\\
\lambda_{4,n}&=			-1(1+o(1)).
\end{align*}

It is clear that
\begin{equation}
\label{o11}
|\Re \lambda_{i,n}| =\frac{1}{2} | \Im   \lambda_{i,n} |^{-{ k}}, \quad
\;  \mbox{ when  }  \; (\alpha, \beta) \in Q^*,\; i=1,2.
\end{equation}

Therefore, by the same argument as   Proposition \ref{12.4}, one can obtain that the decay rate in \eqref{12023} is sharp.
%
\section{Examples}\label{example}
\setcounter{equation}{0}

Assume that $\Omega$ is a bounded open subset of $\mathbb{R}^n$ with smooth  boundary  {$\Gamma$.} Let $A=\Delta^2$ be the bi-Laplace operator on $\Omega$ with domain $ \mathcal{D}(A) = \{u\in H^4(\Omega)\,|\,
u |_{ \Gamma }=  \Delta u |_{\Gamma } =0
\},\; H= L^2(\Omega),$ $  \alpha = 1 , \, \beta=0,\, \gamma={1\over2}$. Then the abstract system \eqref{101} can be written as follows:
\begin{equation}\label{602}
\left\{\begin{array}{ll}
u_{tt} - m \Delta u_{tt} + \sigma \Delta^2 u -\Delta^2\theta=0, &  x\in\Omega, t>0,\\
\theta_{t} - q+  \Delta^2 u_{t} = 0,&  x\in\Omega, t>0, \\
\tau q_t + q +\theta =0 , &   x\in\Omega, t>0,\\
u  =\Delta u = \theta =  q  =0 , &  x\in\Gamma, t>0,\\
u(0) =  u_0, \; u_t(0)=u_1,  \; \theta(0)=\theta_0, \;  q(0)=q_0, &  x\in\Omega.
\end{array}\right.
\end{equation}

By Theorems \ref{1202}, \ref{t-1-1}, \ref{thm1} and \ref{thm2}, we obtain that zero belongs to the spectrum of the generator of semigroup associated with \eqref{602} and   the semigroup  decays polynomially with
optimal order $t^{-{1\over2}}$ for  $m \ge0$.



\bibliographystyle{plain}
\bibliography{reference}
 \end{document}